\journal{Journal of \LaTeX\ Templates}
\begin{document}

\begin{frontmatter}

\title{ Core-EP Decomposition and its Applications }

\author{Hongxing Wang}
\ead{winghongxing0902@163.com}
\address{Department of Financial Mathematics,
Huainan Normal University, Huainan, 232038,   China}


\begin{abstract}
%
%
In this paper,
we introduce the notion of the core-EP  decomposition
 and
some of its properties.
By using the decomposition,
we
derive several  characterizations of the core-EP inverse,
introduce a  pre-order(i.e.the core-EP order)
and
a partial order(i.e. the core-minus partial order),
and
characterize the properties of both orders.
\end{abstract}

\begin{keyword}
core-EP decomposition;
core-EP inverse;
core-EP order;
core-minus partial order

\MSC[2010]  15A09\sep 15A57\sep 15A24
\end{keyword}

\end{frontmatter}

\linenumbers

\section{Introduction}
\numberwithin{equation}{section}  

\newtheorem{theorem}{T{\scriptsize HEOREM}}[section]

\newtheorem{lemma}[theorem]{L{\scriptsize  EMMA}}

\newtheorem{corollary}[theorem]{C{\scriptsize OROLLARY}}
\newtheorem{proposition}[theorem]{P{\scriptsize ROPOSITION}}
\newtheorem{remark}{R{\scriptsize  EMARK}}[section]
\newtheorem{definition}{D{\scriptsize  EFINITION}}[section]

\newtheorem{algorithm}{A{\scriptsize  LGORITHM}}[section]
\newtheorem{example}{E{\scriptsize  XAMPLE}}[section]
\newtheorem{problem}{P{\scriptsize  ROBLEM}}[section]

\newtheorem{assumption}{A{\scriptsize  SSUMPTION}}[section]

In this paper, we use the following notations.
The symbol $ {\mathbb{C}}_{m, n}$ is a set of $m\times n$ matrices with complex entries;
 $A^\ast  $, ${\mathcal{R}}(A)$ and ${\rm rk}\left( A \right)$
 represent
the conjugate transpose, range space  (or column space) and rank
of $A \in {\mathbb{C} }_{m, n} $.
Let $A\in\mathbb{C}_{n, n}$,
  the smallest positive integer $k$,
   which satisfies
   ${\rm rk}\left( A^{k+1} \right)={\rm rk}\left( A^k \right)$,
 is called the index of $A$ and is  denoted by  ${\rm Ind}(A)$.
The index of a non-singular matrix $A$ is $0$ and the index
of a null matrix is $1$.
%
%
%
%
The { Moore-Penrose inverse} of $A\in\mathbb{C}_{m, n}$ is defined as
    the unique matrix $X\in\mathbb{C}_{n, m}$ satisfying  the equations
    \begin{align*}
 (1)~AXA = A, \ \  (2)~XAX = X, \ \
 (3)~\left( {AX} \right)^\ast = AX, \ \
  (4)~\left( {XA} \right)^\ast = XA,
 \end{align*}
and is usually denoted by $X = A^{\dag}$;
the Drazin inverse  of $A \in {\mathbb{C} }_{n,n}$ is defined as
    the unique matrix $X\in\mathbb{C}_{n, n}$ satisfying the equations
\begin{align*}
(1^k)~AXA^k = A^k, \ \ (2)~ XAX = X, \ \ (5)~ AX = XA,
\end{align*}
and is usually denoted by $X = A^D$.
In  particular,
 when  $A\in \mathbb{C}^{\mbox{\rm\footnotesize\texttt{CM}}}_{n }$,
 the matrix  $X$   is called the group inverse of $A$,
 and is denoted by $X = A^\#$({see \rm\cite{Ben2003book}}),
 in which
 $$\mathbb{C}^{\mbox{\rm\footnotesize\texttt{CM}}}_{n }
 =\left\{A\left|A\in\mathbb{C}_{n, n},
 \mbox{\rm rk}\left(A^2\right)= \mbox{\rm rk}\left(A\right) \right.\right\}.$$
%
%
The core inverse of $A \in {\mathbb{C} }_{n}^{\mbox{\rm\footnotesize\texttt{CM}}}$
is defined as the unique matrix $X\in\mathbb{C}_{n,n}$ satisfying
\begin{align}
\label{1-1}
AX = AA^\dag,~~  {\mathcal{R}}\left( {X} \right) \subseteq  {\mathcal{R}}\left( A\right)
\end{align}
and is denoted by $X = A^{\tiny\textcircled{\#}}$, {\rm\cite{Baksalary2010lma}}.
When $A\in\mathbb{C}^{\mbox{\rm\footnotesize\texttt{CM}}}_{n }$,
we call it a  core-invertible (or group-invertible) matrix.
%
%
%

\bigskip

%
%
According to   \cite[Corollary 6]{Hartwig1984lma}, 
for every matrix $A\in\mathbb{C}_{n,n}$ of rank $r$,
there exists  a   unitary matrix $U$
 such that 
\begin{align}
\label{1-2}
A = U\left[ {{\begin{matrix}
 T   & S   \\
 0   & 0   \\
\end{matrix} }} \right]U^\ast,
\end{align}
where  $T\in\mathbb{C}_{r, r}$ and $S\in\mathbb{C}_{r, n-r}$.
 When $A\in\mathbb{C}^{\mbox{\rm\footnotesize\texttt{CM}}}_{n }$
 and
 $A$ is of the form (\ref{1-2}),
 then
 $T$ is non-singular,
 and (\ref{1-2}) is called a core decomposition in this paper.
Meanwhile,
  the core inverse of $A$ is of the form (\ref{1-3}).
 \begin{lemma}
{\rm\cite{Baksalary2010lma}}
 Let $A \in { \mathbb{C}}_n^{\mbox{\rm\footnotesize\texttt{CM}}}$ .
 If $A$ is of the form (\ref{1-2}),
 then
\begin{align}
\label{1-3}
A^{\tiny\textcircled{\#}} = U\left[ {{\begin{matrix}
 {T^{ - 1}}   & 0   \\
 0   & 0   \\
\end{matrix} }} \right]U^\ast ,
\end{align}
where $U$ is a   unitary matrix,
 $T$ is a non-singular matrix,
and
\begin{align}
\label{1-3-1}
AA^{\tiny\textcircled{\#}}  = AA^\dag .
\end{align}
\end{lemma}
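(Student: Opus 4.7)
The plan is to exploit the uniqueness built into the definition (\ref{1-1}): it suffices to verify that the matrix $X$ on the right-hand side of (\ref{1-3}) satisfies both $AX = AA^\dag$ and $\mathcal{R}(X)\subseteq\mathcal{R}(A)$. Once (\ref{1-3}) is established in this way, (\ref{1-3-1}) will follow immediately as a restatement of the first equation in (\ref{1-1}).

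My first computational step is to identify $AA^\dag$ in block form. Since $T$ is non-singular under the standing assumption on (\ref{1-2}), the first $r$ columns of the unitary matrix $U$ form an orthonormal basis of $\mathcal{R}(A)$. Consequently the orthogonal projector $AA^\dag$ onto $\mathcal{R}(A)$ equals $U\left[\begin{matrix} I_r & 0 \\ 0 & 0\end{matrix}\right]U^\ast$.

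Next I would form the product $AX$ using block multiplication together with the unitarity $U^\ast U=I_n$. The $S$-block is annihilated by the zero blocks of $X$, the product $TT^{-1}=I_r$ appears in the top-left corner, and all remaining entries vanish, so $AX$ equals the projector above, i.e.\ $AA^\dag$. The range condition $\mathcal{R}(X)\subseteq\mathcal{R}(A)$ is immediate, because every column of $X$ lies in the span of the first $r$ columns of $U$, which is $\mathcal{R}(A)$. By the uniqueness clause in (\ref{1-1}) I then conclude $X=A^{\tiny\textcircled{\#}}$, giving (\ref{1-3}), and (\ref{1-3-1}) holds automatically. There is no real obstacle here: the argument is a clean verification driven by the block form, and the only point I would emphasize is the role of the non-singularity of $T$ in making the formula well-defined.
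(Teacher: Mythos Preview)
Your verification is correct: computing $AX$ in block form yields the orthogonal projector $U\left[\begin{smallmatrix} I_r & 0 \\ 0 & 0\end{smallmatrix}\right]U^\ast$ onto $\mathcal{R}(A)$, which is $AA^\dag$, and the range inclusion is immediate, so uniqueness in (\ref{1-1}) gives (\ref{1-3}) and hence (\ref{1-3-1}). Note, however, that the paper does not supply its own proof of this lemma---it is quoted from \cite{Baksalary2010lma} as a known result---so there is no in-paper argument to compare your approach against; your write-up simply fills in the routine verification that the original reference contains.
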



By applying the decomposition in \cite[Corollary 6]{Hartwig1984lma},
some mathematical researchers
delve into
 characterizations and properties of
 the  core inverse,
the core partial order
and some relevant issues
in \cite[etc]{Baksalary2010lma,Baksalary2014amc,Manjunatha2014lma,Wang2015lma}.
Note that
 not every matrix  has \textcolor[rgb]{1.00,0.00,0.00}{ the } core inverse,
and
the notion of the core inverse extends to any square matrix 
whose index is not limited to less than or equal to one,
\cite{Baksalary2014amc,Malik2014amc,Manjunatha2014lma}.

Based on the decomposition in \cite[Corollary 6]{Hartwig1984lma},
Ben\'{\i}tez
introduces  a new decomposition for square matrices
in Theorem 2.1 of \cite{Benitez2010ela}.
Ben\'{\i}tez and Liu give the new decomposition  a new proof by CS decomposition
 in \cite{Benitez2013laa}.
 The  decomposition \cite[Theorem 2.1]{Benitez2010ela}
is used to study partial orders, general inverses, etc,
\cite{Benitez2010ela,Benitez2013laa}.

In this paper,
we look at the decomposition\cite[Corollary 6]{Hartwig1984lma} in another way.
Based on the core decomposition,
we introduce
the  core-EP decomposition
for a square matrix over the complex field
in Section \ref{Section-core-EP-D}.
By applying the decomposition,
we derive
some new characterizations of the core-EP inverse
in Section \ref{Section-core-EP-Inverse},
and
characterize the properties
of a pre-order:  the core-EP order
in Section \ref{Section-core-EP-Order},
and
of a partial order: the core-minus partial order
in Section \ref{Section-Core-Minus-Order}, respectively.

\section{The core-EP decomposition}
\label{Section-core-EP-D}

It is well known that the core-nilpotent decomposition
have been widely used in matrix theory \cite{Mitra2010book}:
  Let $A\in \mathbb{C}_{n,n}$ with ${\rm Ind}(A)=k$.
Then $A$ can be written as the sum of matrices $A_1$ and $A_2$ i.e.
$A = A_1 + A_2$ where
$$A_1\in { \mathbb{C}}_n^{\mbox{\rm\footnotesize\texttt{CM}}},  \
A^k_2 = 0
 \mbox{\ \rm and\ \ }
 A_1  A_2 = A_2A_1 = 0.$$
Similarly,
we introduce the notion of the core-EP of decomposition in Theorem \ref{core-EP-Decomposition}.

\begin{theorem}
[Core-EP Decomposition]
\label{core-EP-Decomposition}

Let $A\in \mathbb{C}_{n,n}$ with ${\rm Ind}(A)=k$.
Then $A$ can be written as the sum of matrices $A_1$ and $A_2$ i.e.
$A = A_1 + A_2$ where

{\rm (i)}\ \   $A_1\in { \mathbb{C}}_n^{\mbox{\rm\footnotesize\texttt{CM}}}$;

{\rm (ii)}\ \     $A^k_2 = 0 $;

{\rm (iii)}\ \   $A_1^\ast A_2 = A_2A_1 = 0$.

\noindent Here one or both of $A_1$ and $A_2$ can be null.
\end{theorem}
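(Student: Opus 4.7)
The plan is to extend the core decomposition (\ref{1-2}) by using, in place of $\mathcal{R}(A)$, the eventual range $V := \mathcal{R}(A^k)$. The equality ${\rm rk}(A^{k+1}) = {\rm rk}(A^k)$ that defines ${\rm Ind}(A) = k$ is precisely what makes $V$ an $A$-invariant subspace of dimension $t := {\rm rk}(A^k)$. Writing $U$ for a unitary matrix whose first $t$ columns form an orthonormal basis of $V$, the invariance of $V$ forces $U^{*}AU$ to be block upper triangular:
\[
A = U\begin{bmatrix} T & S \\ 0 & N \end{bmatrix}U^{*}, \qquad T \in \mathbb{C}_{t,t},\ S \in \mathbb{C}_{t,n-t},\ N \in \mathbb{C}_{n-t,n-t}.
\]

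The next step is to pin down the shape of $T$ and $N$. Since $\mathcal{R}(A^k) = V$ is spanned by the first $t$ columns of $U$, the last $n-t$ rows of $U^{*}A^k U$ vanish; expanding the block matrix to the $k$-th power then forces $N^k = 0$. For $T$, identifying $V$ with $\mathbb{C}^t$ via $U$ realizes $T$ as the restriction $A|_V$, and the image $T(V) = A\cdot\mathcal{R}(A^k) = \mathcal{R}(A^{k+1})$ has dimension $t = \dim V$; hence $T : \mathbb{C}^t \to \mathbb{C}^t$ is surjective, and therefore invertible.

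With $T$ invertible and $N$ nilpotent of index $\le k$ in hand, I would define
\[
A_1 = U\begin{bmatrix} T & S \\ 0 & 0 \end{bmatrix}U^{*}, \qquad A_2 = U\begin{bmatrix} 0 & 0 \\ 0 & N \end{bmatrix}U^{*},
\]
so that $A = A_1 + A_2$. A one-line block multiplication gives $A_2 A_1 = 0$ and $A_1^{*}A_2 = 0$, yielding (iii); the block form of $A_2^k$ has $N^k = 0$ in its only nonzero slot, yielding (ii); and the top-left block of $A_1^2$ equals $T\begin{bmatrix} T & S \end{bmatrix}$, so ${\rm rk}(A_1^2) = {\rm rk}(\begin{bmatrix} T & S \end{bmatrix}) = {\rm rk}(A_1)$ because $T$ is invertible, placing $A_1$ in $\mathbb{C}_n^{\mbox{\rm\footnotesize\texttt{CM}}}$ and establishing (i). The degenerate cases allowed in the statement are absorbed in this construction: $t = 0$ gives $A_1 = 0$ and $A_2 = A$, while $t = n$ gives ${\rm Ind}(A) = 0$ and $A_1 = A$, $A_2 = 0$.

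The only mildly subtle step I anticipate is the invariance argument that produces the zero block beneath $T$ together with the rank counting that upgrades $T(V) = \mathcal{R}(A^{k+1})$ into the invertibility of $T$; once those are in hand, the verification of (i)--(iii) is direct block arithmetic with an invertible $T$ and a nilpotent $N$.
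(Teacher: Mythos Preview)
Your argument is correct, but it follows a genuinely different route from the paper's proof of Theorem~\ref{core-EP-Decomposition}. The paper defines $A_1$ and $A_2$ \emph{algebraically} by
\[
A_1 = A^k\bigl(A^k\bigr)^{\tiny\textcircled{\#}}A, \qquad A_2 = A - A^k\bigl(A^k\bigr)^{\tiny\textcircled{\#}}A,
\]
and then verifies (i)--(iii) using properties of the core inverse of $A^k$ (in particular the identity $A^k(A^k)^{\tiny\textcircled{\#}} = A^k(A^k)^\dag$); the block form you wrote down is derived only afterward, as a separate result (Theorem~\ref{Theorem-2-2}). You instead go \emph{directly} to the block form by choosing an orthonormal basis adapted to the $A$-invariant subspace $\mathcal{R}(A^k)$, and read off (i)--(iii) from block arithmetic. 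Your approach is more elementary and self-contained (no prior theory of the core inverse is needed), and it collapses the paper's Theorems~\ref{core-EP-Decomposition} and~\ref{Theorem-2-2} into a single argument. The paper's approach, on the other hand, is basis-free and yields the explicit formula $A_1 = A^k(A^k)^{\tiny\textcircled{\#}}A$, which is exactly what is exploited later (e.g.\ in Corollary~\ref{Cor-3-1} and in the uniqueness proof of Theorem~\ref{Theorem-3-6}); with your construction that formula would have to be recovered a posteriori.
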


\begin{proof}
When $A$ is a non-singular matrix, i.e. ${\rm Ind}(A)=0$,
it is easy to check that
$A_1 = A$ and $A_2 = 0$ satisfy
(i), (ii) and (iii).
When $A=0$, i.e. ${\rm Ind}(A)=1$,
it is easy to check that
$A_1 = 0$ and $A_2 = 0$ satisfy
(i), (ii) and (iii).
When $A$ is nilpotent with ${\rm Ind}(A)=k$,
it is easy to check that
$A_1 = 0$ and $A_2 = A$ satisfy
(i), (ii) and (iii).
So, the matrix $A$
which we consider in the rest of the proof
is a square, singular, non-null and non-nilpotent matrix.

If  $\mbox{Ind}\left( A \right) = k> 1$,
then
$A^k\in { \mathbb{C}}_n^{\mbox{\rm\footnotesize\texttt{CM}}}$.
 Write
 \begin{equation}
 \label{2-1}
\begin{aligned}
A_1
&
=
A^k\left( {A^k} \right)^{\tiny\textcircled{\#}} A,
\\
A_2
&
=
A - A^k\left( {A^k}\right)^{\tiny\textcircled{\#}} A. 
\end{aligned}
\end{equation}
Since
$
{\rm rk}\left(   A^{k+1} \right) =
{\rm rk}\left(   A^{k} \right) $
and
$
{\rm rk}\left(   A_1 \right)
\leq
{\rm rk}\left(   A^k \right)$,
it follows that
\begin{align*}
{\rm rk}\left(   A^{k+1} \right)
&
 =
{\rm rk}\left( A A^k\left( {A^k} \right)^{\tiny\textcircled{\#}} A^k \right)
 \leq
{\rm rk}\left( A^k A\left( {A^k} \right)^{\tiny\textcircled{\#}} A \right)
  \\
  &
  =
  {\rm rk}\left( A^k\left( {A^k} \right)^ {\tiny\textcircled{\#}} A^kA\left( {A^k} \right)^{\tiny\textcircled{\#}} A\right)
  \\
&
  =
{\rm rk}\left( A_1^2\right)
\leq
{\rm rk}\left(   A_1 \right)
\leq
{\rm rk}\left(   A^k \right).
\end{align*}
Therefore,
 we have
 $A_1\in { \mathbb{C}}_n^{\mbox{\rm\footnotesize\texttt{CM}}}$.
By applying (\ref{1-3-1}),
we derive
\begin{align*}
 A_2 A_1
 &
  =
  \left( {A - A^k\left( {A^k} \right)^{\tiny\textcircled{\#}} A} \right)A^k\left(
{A^k} \right)^{\tiny\textcircled{\#}} A \\
&
= AA^k\left( {A^k} \right)^{\tiny\textcircled{\#}} A - A^k\left( {A^k}
\right)^{\tiny\textcircled{\#}} AA^k\left( {A^k} \right)^{\tiny\textcircled{\#}} A = 0,
\\
 A_1^\ast  A_2
 &
  =
  \left( {A^k\left( {A^k} \right)^{\tiny\textcircled{\#}} A} \right)^\ast  \left( {A
- A^k\left( {A^k} \right)^{\tiny\textcircled{\#}} A} \right)
\\
&
 =
 A^\ast  \left\{ {A^k\left( {A^k}
\right)^{\tiny\textcircled{\#}} \left( {A - A^k\left( {A^k} \right)^{\tiny\textcircled{\#}} A} \right)}
\right\} = 0 .
\end{align*}
By using $A_2 A_1= 0$,
it is easy to check that
\begin{align*}
 A_2^k
&
 =
  A_2 \left( {A_1 + A_2 }
\right)^{k - 1} = A_2 A^{k - 1}
\\
&
=
\left( {A - A^k\left( {A^k} \right)^{\tiny\textcircled{\#}}
A} \right)A^{k - 1} = 0.
\end{align*}
\end{proof}

Since the case of $A$ being nilpotent(or null, or no-singular) is considered to be trivial,
  we consider the matrix $A$ which is a square, singular, non-null and non-nilpotent
matrix in the rest of the paper, unless indicated otherwise .


\begin{theorem}
\label{Theorem-3-6}
 The core-EP decomposition of a given matrix  is unique.
\end{theorem}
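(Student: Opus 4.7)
The plan is to show that, in any core-EP decomposition $A = A_1 + A_2$, the summand $A_1$ is forced to equal $P_V A$, where $V = \mathcal{R}(A^k)$ and $P_V$ is the orthogonal projector onto $V$. Since both $V$ and $P_V$ depend only on $A$, the uniqueness of $A_1$ (and hence of $A_2 = A - A_1$) follows at once, and a second decomposition $A = B_1 + B_2$ must agree summand-by-summand with the first.

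First I would compute $A^k$ in closed form. Because condition (iii) gives $A_2 A_1 = 0$, every monomial in the binary expansion of $(A_1 + A_2)^k$ that contains an $A_2 A_1$ substring vanishes, and the surviving monomials are precisely those in which all $A_1$ factors precede all $A_2$ factors:
\[
A^k \;=\; \sum_{j=0}^{k} A_1^{\,j}\, A_2^{\,k-j}.
\]
The $j=0$ term equals $A_2^k = 0$ by condition (ii), and every remaining summand has a leading $A_1$, so $\mathcal{R}(A^k) \subseteq \mathcal{R}(A_1)$. Multiplying the same expansion on the right by $A_1$ and again invoking $A_2 A_1 = 0$ collapses the sum to $A^k A_1 = A_1^{k+1}$. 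Since $A_1 \in \mathbb{C}_n^{\mathrm{CM}}$ gives $\mathrm{rk}(A_1^{k+1}) = \mathrm{rk}(A_1)$, one obtains
\[
\mathrm{rk}(A_1) \;=\; \mathrm{rk}(A_1^{k+1}) \;=\; \mathrm{rk}(A^k A_1) \;\le\; \mathrm{rk}(A^k),
\]
so $\mathcal{R}(A_1) = \mathcal{R}(A^k) =: V$.

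Next I would bring in condition $A_1^* A_2 = 0$, which yields $\mathcal{R}(A_2) \subseteq \mathcal{N}(A_1^*) = \mathcal{R}(A_1)^\perp = V^\perp$. Applying the orthogonal projector $P_V$ to $A = A_1 + A_2$ then gives $P_V A_1 = A_1$ (range in $V$) and $P_V A_2 = 0$ (range in $V^\perp$), so $A_1 = P_V A$. Because $V$ and $P_V$ are intrinsic to $A$, any second core-EP decomposition $A = B_1 + B_2$ leads, by the identical argument, to $B_1 = P_V A = A_1$, and then $B_2 = A - B_1 = A_2$.

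The main obstacle is the algebraic step of establishing $\mathcal{R}(A_1) = \mathcal{R}(A^k)$, which requires careful bookkeeping in the expansion of $(A_1 + A_2)^k$ and uses the noncommuting relation $A_2 A_1 = 0$ twice (once to eliminate terms, once to simplify $A^k A_1$). A conceptual subtlety worth flagging is that it is the EP-type orthogonality hypothesis $A_1^* A_2 = 0$, rather than an algebraic $A_1 A_2 = 0$, that is responsible for the projector in the final step being \emph{orthogonal}; this is exactly what makes $P_V A$ a canonical object depending only on $A$ and thereby forces the decomposition to be unique.
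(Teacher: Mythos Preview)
Your proof is correct and is essentially the same argument as the paper's, only phrased more geometrically. The paper writes the orthogonal projector onto $\mathcal{R}(A^k)$ as $A^k(A^k)^{\dag}$ (equivalently $A^k(A^k)^{\tiny\textcircled{\#}}$) and, for an arbitrary core-EP decomposition $A=B_1+B_2$, verifies $A^k(A^k)^{\dag}B_1=B_1$ and $A^k(A^k)^{\dag}B_2=0$ using the very same expansion $A^k=\sum_{j}B_1^{\,j}B_2^{\,k-j}$ and the relations $B_2B_1=0$, $B_1^\ast B_2=0$, $B_2^k=0$ that you use; it then concludes $B_1=A^k(A^k)^{\dag}A$, which is exactly your $P_VA$.
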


\begin{proof}
 Let   $A = A_1 + A_2 $ be the core-EP decomposition of $A\in\mathbb{C}_{n,n}$,
 where  $ A_1$ is a core-invertible matrix,  and $A_2$ is a nilpotent matrix.
And let $A_1$ and $A_2$ be as in (\ref{2-1}).

Let
$A = B_1 + B_2 $ be another core-EP decomposition of $A$,
where $B_1$ and $B_2$ satisfy
the conditions (i), (ii) and (iii) in Theorem \ref{core-EP-Decomposition}.
Then
$
A^k={\sum\limits_{i = 0}^k {B_1^iB_2^{k - i}} }$.
By using
$B_1^\ast B_2=0$
and
$B_2^k=0$,
we obtain
$\left(A^k\right)^\ast B_2=0$.
Therefore,
$\left(A^k\right)^\dag B_2=0$.
By using
$B_2B_1=0$
and
$B_1\in { \mathbb{C}}_n^{\mbox{\rm\footnotesize\texttt{CM}}}$,
Then
$A^k B_1 \left(B_1^k\right)^\#=B_1$.
Since
$A^k\left( {A^k} \right)^{\dag}B_1
=A^k\left( {A^k} \right)^{\dag}A^k B_1 \left(B_1^k\right)^\#
=A^k B_1 \left(B_1^k\right)^\#
=B_1^{k+1}\left(B_1^k\right)^\#
B_1$.
It follows that
\begin{align}
\nonumber
B_1-A_1
&
=
B_1-A^k\left( {A^k} \right)^{\tiny\textcircled{\#}} A
=
B_1-A^k\left( {A^k} \right)^{\dag} A
\\
\nonumber
&
=
B_1-A^k\left( {A^k} \right)^{\dag}B_1-A^k\left( {A^k} \right)^{\dag}B_2
\\
\nonumber
&
=
0,
\end{align}
that is,
$B_1=A_1$.
Therefore,
the core-EP decomposition of $A$  is unique.
\end{proof}

\begin{theorem}
\label{Theorem-2-2}
 Let $A = A_1 + A_2 $ is the core-EP decomposition  of $A$.
 Then there exists a  unitary matrix $U$
 such that
\begin{align}
\label{2-2}
A_1 = U\left[ {{\begin{matrix}
 T   & S   \\
 0   & 0   \\
\end{matrix} }} \right]U^\ast
\
{\rm and}
\
A_2 = U\left[ {{\begin{matrix}
 0   & 0   \\
 0   & N   \\
\end{matrix} }} \right]U^\ast  ,
\end{align}
where $T$ is   non-singular, and $N$ is  nilpotent.
\end{theorem}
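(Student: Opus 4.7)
The plan is to exploit the already-established core decomposition for $A_1$ (which is core-invertible by hypothesis) and then use the two orthogonality-type conditions $A_1^\ast A_2 = 0$ and $A_2 A_1 = 0$ to pin down the block structure of $A_2$ relative to the same unitary basis.

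First, since $A_1 \in \mathbb{C}_n^{\mbox{\rm\footnotesize\texttt{CM}}}$, the lemma preceding the core-EP section applies: there is a unitary $U$ such that
\begin{align*}
A_1 = U\left[ {\begin{matrix} T & S \\ 0 & 0 \end{matrix}} \right] U^\ast,
\end{align*}
with $T$ non-singular. I would then write $A_2$ in the same $2\times 2$ block partition induced by $U$, say
\begin{align*}
A_2 = U\left[ {\begin{matrix} P & Q \\ R & N \end{matrix}} \right] U^\ast.
\end{align*}
The goal is to show $P=0$, $Q=0$, $R=0$, and that $N$ is nilpotent.

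The key computation is to substitute these block forms into the two identities from part (iii) of Theorem~\ref{core-EP-Decomposition}. Expanding $A_1^\ast A_2 = 0$ yields the block equation with $(1,1)$-entry $T^\ast P$ and $(1,2)$-entry $T^\ast Q$; invertibility of $T$ (hence of $T^\ast$) forces $P=0$ and $Q=0$. Expanding $A_2 A_1 = 0$ then gives the $(2,1)$-entry $RT = 0$, and again the invertibility of $T$ yields $R=0$. This leaves $A_2 = U\,\mathrm{diag}(0,N)\,U^\ast$ as required. Finally, $A_2^k = U\,\mathrm{diag}(0,N^k)\,U^\ast = 0$ by part (ii), so $N^k = 0$ and $N$ is nilpotent.

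There is no serious obstacle here; the only subtlety is recognising that \emph{both} conditions in (iii) are used and each plays a distinct role: $A_1^\ast A_2 = 0$ annihilates the top row blocks $P, Q$ of $A_2$, while $A_2 A_1 = 0$ handles the bottom-left block $R$. Neither single condition suffices in general, and the non-singularity of $T$ (inherited from $A_1$ being core-invertible, i.e.\ having index at most one) is what converts each block equation into the vanishing of the respective block.
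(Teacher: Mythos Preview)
Your proof is correct and follows essentially the same approach as the paper: start from the core decomposition of $A_1$, write $A_2$ in blocks relative to the same unitary $U$, use $A_1^\ast A_2=0$ together with the invertibility of $T$ to kill the top row of $A_2$, use $A_2 A_1=0$ to kill the bottom-left block, and infer nilpotency of $N$ from that of $A_2$. The only cosmetic difference is your block labelling ($P,Q,R$ versus the paper's $X_1,X_2,X_3$).
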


\begin{proof}
For  $A_1\in { \mathbb{C}}_n^{\mbox{\rm\footnotesize\texttt{CM}}}$,
let
\begin{align*}
A_1 = U\left[ {{\begin{matrix}
 T   & S   \\
 0   & 0   \\
\end{matrix} }} \right]U^\ast,
\end{align*}
where $T$ is   non-singular, and $U$ is unitary.
Write
\begin{align*}
A_2 = U\left[ {{\begin{matrix}
 {X_1 }   & {X_2 }   \\
 {X_3 }   & N   \\
\end{matrix} }} \right]U^\ast  .
\end{align*}
By applying $A_1^\ast  A_2 = 0$,
we have
\begin{align*}
A_1^\ast  A_2 = U\left[ {{\begin{matrix}
 {T^\ast  }   & 0   \\
 {S^\ast  }   & 0   \\
\end{matrix} }} \right]\left[ {{\begin{matrix}
 {X_1 }   & {X_2 }   \\
 {X_3 }   & N   \\
\end{matrix} }} \right]U^\ast  = U\left[ {{\begin{matrix}
 {T^\ast  X_1 }   & {T^\ast  X_2 }   \\
 {S^\ast  X_1 }   & {S^\ast  X_2 }   \\
\end{matrix} }} \right]U^\ast  = 0.
\end{align*}
Therefore,
$T^\ast  X_1 = 0$ and $T^\ast  X_2 = 0$.
Since $T$ is invertible,
we obtain  $X_1 = 0$ and $X_2 = 0$.
By applying $A_2 A_1 = 0$,
we have
\begin{align*}
A_2 A_1 = U\left[ {{\begin{matrix}
 0   & 0   \\
 {X_3 }   & N   \\
\end{matrix} }} \right]\left[ {{\begin{matrix}
 T   & S   \\
 0   & 0   \\
\end{matrix} }} \right]U^\ast  = U\left[ {{\begin{matrix}
 0   & 0   \\
 {X_3 T}   & {X_3 S}   \\
\end{matrix} }} \right]U^\ast ,
\end{align*}
Therefore, $X_3 = 0$
 and
\begin{align*}
A_2 = U\left[{{\begin{matrix}
 0   & 0   \\
 0   & N   \\
\end{matrix} }} \right]U^\ast.
\end{align*}%
Since $A_2 $ is  nilpotent,  it is clear that $N$ is  nilpotent.
\end{proof}

\bigskip

In the following sections,
we consider
the core-EP inverse,
the core-EP order
and  the core-minus partial order
by using the core-EP decomposition.

\section{Characterizations of core-EP inverse}
\label{Section-core-EP-Inverse}

When the index of a given square matrix is less than or equal to one,
by applying the decomposition in \cite[Corollary 6]{Hartwig1984lma},
mathematical researchers introduce  the core inverse (\ref{1-1}) and (\ref{1-2}).
In\cite{Baksalary2014amc,Malik2014amc,Manjunatha2014lma},
the notion of the core inverse extends to any square matrix
whose index is not limited to less than or equal to one.
For example,
let $A\in { \mathbb{C}}_{n,n}$ be of the form (\ref{1-2}),
then 
 the generalized core inverse
 $A^{\tiny\textcircled{\dag}}$
 can be expressed as
 \cite[Theorem 3.5 and Remark 2]{Manjunatha2014lma}:
 \begin{align}
\label{1-5}
A^{\tiny\textcircled{\dag}}
= A^k\left( {\left( {A^\ast  } \right)^kA^{k + 1}} \right)^ -A^k,
\end{align}
where $k={\rm Ind}(A)$.
We also call
 the generalized core inverse
 $A^{\tiny\textcircled{\dag}}$
the core-EP inverse of $A$.
%
%
When $k\leq 1$,
it is easy to verify that
 the   core-EP inverse  coincide with
 the  core inverse\cite[Definition 1]{Baksalary2010lma}.
Some properties and characterizations of the core inverse，
the core-EP inverse and
other generalized core inverses
 can be seen in \cite{Baksalary2014amc,Malik2014amc,Manjunatha2014lma,Wang2015lma}
\begin{lemma}{\rm\cite[Lemma 3.3]{Manjunatha2014lma}}
\label{Lemma-2}
 Let  $A\in \mathbb{C}_{n, n}$ with
${\rm Ind}\left( A \right) = k$.
Then
  $G$ is a core--EP inverse of $A$
if and only if
  $G$ is a matrix satisfying the conditions:
\begin{align*}
\left( {1^k} \right)~ XA^{k + 1} = A^k, \ \
\left( 2 \right)~ XAX = X,  \ \
\left(3 \right)~ \left( {AX} \right)^\ast  = AX ,
\end{align*}%
and
$\mathcal{R}\left( G \right) \subseteq \mathcal{R}\left( {A^k}\right)$.
\end{lemma}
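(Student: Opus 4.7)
The natural route is to exploit the core--EP decomposition of Theorems~\ref{core-EP-Decomposition}--\ref{Theorem-2-2}, which allows one to write
\begin{align*}
A = U\left[\begin{matrix} T & S \\ 0 & N \end{matrix}\right]U^{\ast},
\end{align*}
with $T$ non--singular of order $r=\operatorname{rk}(A^{k})$ and $N$ nilpotent satisfying $N^{k}=0$. From this block form one reads off
\begin{align*}
A^{k} = U\left[\begin{matrix} T^{k} & \widetilde{S} \\ 0 & 0 \end{matrix}\right]U^{\ast}, \qquad A^{k+1} = U\left[\begin{matrix} T^{k+1} & T\widetilde{S} \\ 0 & 0 \end{matrix}\right]U^{\ast},
\end{align*}
where $\widetilde{S}=\sum_{j=0}^{k-1}T^{k-1-j}SN^{j}$, and since $T^{k}$ is invertible the range $\mathcal{R}(A^{k})$ coincides with the span of the first $r$ columns of $U$.

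\textbf{Uniqueness of the candidate.} I would partition any $X$ conformally as $X=U\left[\begin{matrix}X_{11}&X_{12}\\X_{21}&X_{22}\end{matrix}\right]U^{\ast}$ and drive the four conditions one at a time. The range inclusion $\mathcal{R}(X)\subseteq\mathcal{R}(A^{k})$ kills $X_{21}$ and $X_{22}$; substituting into $XA^{k+1}=A^{k}$ and comparing $(1,1)$-blocks forces $X_{11}=T^{-1}$ while the $(1,2)$-match $T^{-1}\cdot T\widetilde{S}=\widetilde{S}$ is automatic; Hermiticity of $AX=U\left[\begin{matrix}I&TX_{12}\\0&0\end{matrix}\right]U^{\ast}$, together with invertibility of $T$, then forces $X_{12}=0$; and the equation $XAX=X$ is a mechanical check at this stage. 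So at most one matrix can satisfy all four conditions, namely
\begin{align*}
X_{\star}=U\left[\begin{matrix}T^{-1}&0\\0&0\end{matrix}\right]U^{\ast}.
\end{align*}

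\textbf{Matching $X_{\star}$ with the formula, and the main obstacle.} To close the equivalence I would plug the block form of $A$ into the defining formula~(\ref{1-5}) for $A^{\tiny\textcircled{\dag}}$ and verify that $A^{k}\bigl((A^{\ast})^{k}A^{k+1}\bigr)^{-}A^{k}$ collapses to exactly $X_{\star}$. The delicate point here, and what I expect to be the main obstacle, is justifying that the formula is independent of the particular $\{1\}$-inverse of $(A^{\ast})^{k}A^{k+1}$ one chooses; the clean way is to observe $\mathcal{R}((A^{\ast})^{k}A^{k+1})=\mathcal{R}((A^{\ast})^{k})$ and $\mathcal{N}((A^{\ast})^{k}A^{k+1})=\mathcal{N}(A^{k+1})=\mathcal{N}(A^{k})$, so that the outer multiplications by $A^{k}$ absorb whatever freedom sits inside $(\cdot)^{-}$. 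Once this invariance is in place, existence together with the uniqueness step yields the claimed equivalence.
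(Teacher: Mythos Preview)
The paper does not prove this lemma at all; it is quoted verbatim from \cite[Lemma~3.3]{Manjunatha2014lma} with no argument supplied. Within the present paper the lemma is \emph{used} (together with the block form of Theorem~\ref{Theorem-2-2}) to establish Theorem~\ref{Theorem-4-0} and Corollary~\ref{Theorem-4-0-1}, i.e.\ the explicit formula $A^{\tiny\textcircled{\dag}}=U\left[\begin{smallmatrix}T^{-1}&0\\0&0\end{smallmatrix}\right]U^{\ast}$.

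Your route effectively inverts this dependence: you use the core--EP decomposition directly to show that the four conditions pin down a unique matrix $X_{\star}$, and then identify $X_{\star}$ with the defining expression~(\ref{1-5}). This is legitimate and not circular, since Theorems~\ref{core-EP-Decomposition}--\ref{Theorem-2-2} are proved without appealing to the lemma. It is in fact more informative than what the paper does, since you recover Corollary~\ref{Theorem-4-0-1} as a by-product rather than as a consequence of the lemma.

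One genuine caution on your ``main obstacle''. Your invariance sketch does not go through as stated: the standard criterion for $B\,M^{-}C$ to be independent of the choice of $\{1\}$-inverse requires $\mathcal{R}(C)\subseteq\mathcal{R}(M)$, and here $C=A^{k}$ while $\mathcal{R}(M)=\mathcal{R}((A^{\ast})^{k})$; the inclusion $\mathcal{R}(A^{k})\subseteq\mathcal{R}((A^{\ast})^{k})$ fails in general (try $A=\left[\begin{smallmatrix}1&1\\0&0\end{smallmatrix}\right]$, where different $\{1\}$-inverses give different values for~(\ref{1-5}) as printed). The right-hand factor in~(\ref{1-5}) should read $(A^{k})^{\ast}$ rather than $A^{k}$ --- this is how it appears in \cite{Manjunatha2014lma} --- and with that correction your range/null-space observations do yield invariance. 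Alternatively, simply take the well-definedness of the core--EP inverse as part of what is being imported from \cite{Manjunatha2014lma} and verify the block identity for one convenient inner inverse.
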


%


In this section,
we give some characterizations of the core-EP inverse by using the core-EP decomposition.
\begin{theorem}
\label{Theorem-4-0}
 Let   $A = A_1 + A_2 $ be the core-EP decomposition of $A\in\mathbb{C}_{n,n}$,
 where  $ A_1$ is a core-invertible matrix,  and $A_2$ is a nilpotent matrix.
 Then
 $$A^{\tiny\textcircled{\dag}}= A_1^{\tiny\textcircled{\#}}.$$
\end{theorem}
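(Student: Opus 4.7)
The plan is to verify that $A_1^{\tiny\textcircled{\#}}$ satisfies the four characterizing conditions of the core-EP inverse given in Lemma \ref{Lemma-2}, namely $XA^{k+1}=A^k$, $XAX=X$, $(AX)^\ast = AX$, and $\mathcal{R}(X)\subseteq \mathcal{R}(A^k)$. Since the core-EP inverse is unique among matrices satisfying these, this identification is enough.

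My first step is to invoke Theorem \ref{Theorem-2-2} and simultaneously bring $A_1$ and $A_2$ into the canonical block form
\begin{align*}
A_1 = U\left[\begin{matrix} T & S \\ 0 & 0 \end{matrix}\right]U^\ast,\qquad
A_2 = U\left[\begin{matrix} 0 & 0 \\ 0 & N \end{matrix}\right]U^\ast,
\end{align*}
so that $A = U\bigl[\begin{smallmatrix} T & S \\ 0 & N \end{smallmatrix}\bigr]U^\ast$, with $T$ non-singular and $N^k=0$. By Lemma \ref{Lemma-2} applied to $A_1$ (or directly by the formula (\ref{1-3})), the candidate inverse is
\begin{align*}
A_1^{\tiny\textcircled{\#}} = U\left[\begin{matrix} T^{-1} & 0 \\ 0 & 0 \end{matrix}\right]U^\ast.
\end{align*}

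Next I would compute the relevant products in block form. The conditions $XAX=X$ and $(AX)^\ast=AX$ drop out immediately from
\[
AX = U\left[\begin{matrix} I & 0 \\ 0 & 0 \end{matrix}\right]U^\ast,
\]
which is Hermitian and idempotent, so $XAX=X\cdot AX=X$ by block multiplication. For the range condition, I would compute $A^k = U\bigl[\begin{smallmatrix} T^k & \widetilde S \\ 0 & 0 \end{smallmatrix}\bigr]U^\ast$ using $N^k=0$, and observe that since $T^k$ is invertible both $\mathcal{R}(X)$ and $\mathcal{R}(A^k)$ coincide with $\mathcal{R}\bigl(U\bigl[\begin{smallmatrix} I_r \\ 0 \end{smallmatrix}\bigr]\bigr)$.

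The one condition that requires genuine bookkeeping is $XA^{k+1}=A^k$: this is where the off-diagonal entry $\widetilde S = \sum_{i=0}^{k-1} T^{k-1-i}SN^i$ needs to be handled carefully. Computing $XA^{k+1}$ yields a $(1,2)$-block $T^{-1}(T^kS + \widetilde S N)$, and one must check it telescopes back to $\widetilde S$; the key is that the tail term $T^{-1}SN^k$ vanishes because $N^k=0$. This index-shifting identity is the only non-trivial calculation and is the main obstacle to a clean presentation. Once it is checked, the four conditions of Lemma \ref{Lemma-2} are all verified, and uniqueness of the core-EP inverse forces $A^{\tiny\textcircled{\dag}} = A_1^{\tiny\textcircled{\#}}$.
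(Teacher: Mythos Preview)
Your proof is correct and follows essentially the same route as the paper: pass to the block form of Theorem~\ref{Theorem-2-2}, write $A_1^{\tiny\textcircled{\#}}=U\bigl[\begin{smallmatrix}T^{-1}&0\\0&0\end{smallmatrix}\bigr]U^\ast$, and verify the four conditions of Lemma~\ref{Lemma-2}. The only difference is that you make the check of $XA^{k+1}=A^k$ harder than necessary: you compute $A^{k+1}=A^k\cdot A$, which forces the telescoping argument on the $(1,2)$-block. The paper instead multiplies in the other order, $A^{k+1}=A\cdot A^k$; since $A^k$ already has bottom row zero, this gives $(1,2)$-block $T\widehat T$ directly, and then $T^{-1}\cdot T\widehat T=\widehat T$ is immediate with no index-shifting needed.
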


\begin{proof}
Let $A_1$ be    as in (\ref{2-2}),
then
$$A_1^{\tiny\textcircled{\#}} = U\left[ {{\begin{matrix}
 {T^{ - 1}}   & 0   \\
 0   & 0   \\
\end{matrix} }} \right]U^\ast.
$$
Write
$$A^k = U\left[ {{\begin{matrix}
 {T^k}   & \widehat{T}   \\
 0   & 0   \\
\end{matrix} }} \right]U^\ast.$$
where $\widehat{T} $ is a corresponding matrix.
It is easy to see  that
$\mathcal{R}\left( A_1^{\tiny\textcircled{\#}} \right) \subseteq \mathcal{R}\left( {A^k} \right)$
and
\begin{align*}
 &
 A_1^{\tiny\textcircled{\#}} A^{k + 1}
 =
  U\left[{{\begin{matrix}
 {T^{ - 1}}   & 0   \\
 0   & 0   \\
\end{matrix} }} \right] \left[ {{\begin{matrix}
 {T^{k + 1}}   & {T\widehat{T}}   \\
 0   & 0   \\
\end{matrix} }} \right]U^\ast  = U\left[{{\begin{matrix}
 {T^k}   & {\widehat{T}}   \\
 0   & 0   \\
\end{matrix} }} \right]U^\ast  = A^k,
\\
 &
 A_1^{\tiny\textcircled{\#}} AA_1^{\tiny\textcircled{\#}}
 =
  U\left[ {{\begin{matrix}
 {T^{ - 1}}   & 0   \\
 0   & 0   \\
\end{matrix} }} \right]\left[{{\begin{matrix}
 T   & S   \\
 0   & N   \\
\end{matrix} }} \right]\left[{{\begin{matrix}
 {T^{ - 1}}   & 0   \\
 0   & 0   \\
\end{matrix} }} \right]U^\ast  = U\left[ {{\begin{matrix}
 {T^{ - 1}}   & 0   \\
 0   & 0   \\
\end{matrix} }} \right]U^\ast  = A_1^{\tiny\textcircled{\#}} ,
\\
 &
 \left( {AA_1^{\tiny\textcircled{\#}} } \right)^\ast
 =
 \left[ {U\left[
{{\begin{matrix}
 T   & S   \\
 0   & N   \\
\end{matrix} }} \right]\left[ {{\begin{matrix}
 {T^{ - 1}}   & 0   \\
 0   & 0   \\
\end{matrix} }} \right]U^\ast  } \right]^\ast  = \left[ {U\left[
{{\begin{matrix}
 I   & S   \\
 0   & N   \\
\end{matrix} }} \right]U^\ast  } \right]^\ast  = AA_1^{\tiny\textcircled{\#}} .
\end{align*}
Therefore,
by applying Theorem \ref{Theorem-2-2},
we have
$A^{\tiny\textcircled{\dag}}= A_1^{\tiny\textcircled{\#}} $.
\end{proof}

\begin{corollary}
\label{Theorem-4-0-1}
 Let    $A = A_1 + A_2 $ be
 the core-EP decomposition of $A\in\mathbb{C}_{n,n}$,
 where  $ A_1$ is a core-invertible matrix,  and $A_2$ is a nilpotent matrix.
 Let the decompositions of $ A_1$ and $ A_2$ be as in (\ref{2-2}).
 Then
 \begin{align}
 \label{Th-4-0-1-1}
 A^{\tiny\textcircled{\dag}}
 =
 U\left[ {{\begin{matrix}
 {T^{ - 1}}   & 0   \\
 0   & 0   \\
\end{matrix} }} \right]U^\ast.
\end{align}
\end{corollary}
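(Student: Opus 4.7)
The plan is to read this off as an immediate consequence of Theorem \ref{Theorem-4-0} combined with Lemma 1.1 (the formula for the ordinary core inverse of a matrix written in Hartwig--Spindelb\"ock form). So essentially no new work is required; the corollary is really just the substitution of the known formula for $A_1^{\tiny\textcircled{\#}}$ into the identity $A^{\tiny\textcircled{\dag}} = A_1^{\tiny\textcircled{\#}}$.

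More concretely, first I would invoke Theorem \ref{Theorem-4-0} to rewrite
\begin{align*}
A^{\tiny\textcircled{\dag}} = A_1^{\tiny\textcircled{\#}}.
\end{align*}
Next, I would observe that by hypothesis $A_1$ is given in the form (\ref{2-2}), namely
$A_1 = U\bigl[\begin{smallmatrix} T & S \\ 0 & 0 \end{smallmatrix}\bigr]U^\ast$
with $U$ unitary and $T$ nonsingular, which is precisely the decomposition (\ref{1-2}) required to apply Lemma 1.1 to the core-invertible matrix $A_1$. Lemma 1.1 then yields
\begin{align*}
A_1^{\tiny\textcircled{\#}} = U\left[\begin{matrix} T^{-1} & 0 \\ 0 & 0 \end{matrix}\right]U^\ast,
\end{align*}
and substituting this into the previous display gives (\ref{Th-4-0-1-1}).

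There is no genuine obstacle here; the only subtlety worth noting is making sure the same unitary $U$ and the same block $T$ appearing in (\ref{2-2}) are the ones fed into Lemma 1.1. Since (\ref{2-2}) was constructed in Theorem \ref{Theorem-2-2} precisely by taking a Hartwig--Spindelb\"ock decomposition of $A_1$, this compatibility is automatic, so the argument reduces to a one-line citation chain.
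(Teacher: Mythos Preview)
Your proposal is correct and matches the paper's approach: the paper gives no separate proof for this corollary, since the formula $A_1^{\tiny\textcircled{\#}} = U\bigl[\begin{smallmatrix} T^{-1} & 0 \\ 0 & 0 \end{smallmatrix}\bigr]U^\ast$ already appears as the first line in the proof of Theorem~\ref{Theorem-4-0}, and combining it with that theorem's conclusion $A^{\tiny\textcircled{\dag}} = A_1^{\tiny\textcircled{\#}}$ gives the result immediately.
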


\begin{corollary}
\label{Cor-3-1}
Let   $A = A_1 + A_2 $ be the core-EP decomposition of $A\in\mathbb{C}_{n,n}$,
 where  $ A_1$ is a core-invertible matrix,  and $A_2$ is a nilpotent matrix.
 Then \begin{align}
 A^{\tiny\textcircled{\dag}}
 &
 =
  A^k\left( {A^{k + 1}} \right)^{\tiny\textcircled{\#}},
  \\
\label{3-2}
 AA^{\tiny\textcircled{\dag}}
 &
  =
   A^k\left( {A^k} \right)^{\tiny\textcircled{\#}} = A^k\left( {A^k} \right)^\dag,
   \\
\label{3-3}
   A_1
   &
   =AA^{\tiny\textcircled{\dag}} A,
   \\
\label{3-4}
   A_2
   &
   =A-AA^{\tiny\textcircled{\dag}} A.
 \end{align}
\end{corollary}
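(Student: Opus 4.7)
The plan is to reduce all four identities to block computations in the unitary form provided by Theorem \ref{Theorem-2-2}, relying on the formula $A^{\tiny\textcircled{\dag}} = A_1^{\tiny\textcircled{\#}}$ from Theorem \ref{Theorem-4-0} and its explicit block expression in Corollary \ref{Theorem-4-0-1}. Writing $A = A_1+A_2$ in the form
\[
A = U\left[\begin{matrix} T & S \\ 0 & N \end{matrix}\right] U^\ast,
\]
with $T$ invertible and $N$ nilpotent of index at most $k$, one computes inductively that
\[
A^j = U\left[\begin{matrix} T^j & \widetilde{S}_j \\ 0 & N^j \end{matrix}\right] U^\ast \qquad (j\geq 0),
\]
for suitable blocks $\widetilde{S}_j$. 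Because $N^k = 0$, the matrices $A^k$ and $A^{k+1}$ both have vanishing $(2,2)$-block, so each of them satisfies $\mathrm{rk}((\cdot)^2)=\mathrm{rk}(\cdot)$ and is therefore core-invertible; Lemma \ref{Lemma-2} (cf.\ (\ref{1-3})) yields the explicit forms of $(A^k)^{\tiny\textcircled{\#}}$ and $(A^{k+1})^{\tiny\textcircled{\#}}$ with $T^{-k}$ and $T^{-(k+1)}$ respectively in the $(1,1)$-block and zeros elsewhere.

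With these block forms in hand, identity (3.1) is obtained by multiplying $A^k$ and $(A^{k+1})^{\tiny\textcircled{\#}}$: the product collapses to the block matrix with $T^{-1}$ in the $(1,1)$-position, which matches (\ref{Th-4-0-1-1}). Identity (\ref{3-2}) splits into two equalities: the first, $AA^{\tiny\textcircled{\dag}} = A^k(A^k)^{\tiny\textcircled{\#}}$, is again a direct block multiplication -- both sides equal $U\,\mathrm{diag}(I,0)\,U^\ast$; the second, $A^k(A^k)^{\tiny\textcircled{\#}} = A^k(A^k)^\dag$, is just (\ref{1-3-1}) applied to the core-invertible matrix $A^k$. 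For (\ref{3-3}), one multiplies $AA^{\tiny\textcircled{\dag}} = U\,\mathrm{diag}(I,0)\,U^\ast$ by $A$ on the right and immediately recovers the block form of $A_1$ in (\ref{2-2}); then (\ref{3-4}) follows by subtracting (\ref{3-3}) from the decomposition $A = A_1+A_2$.

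There is no real obstacle here: every step is a routine verification in the block basis, and uniqueness of the core-EP decomposition (Theorem \ref{Theorem-3-6}) guarantees there is nothing subtle to track. The only point that deserves a brief justification is that $A^k$ is core-invertible (so that $(A^k)^{\tiny\textcircled{\#}}$ appearing in (\ref{3-2}) is well-defined), and this is immediate from $\mathrm{Ind}(A)=k$ since $\mathrm{rk}(A^{2k}) = \mathrm{rk}(A^k)$. Organizing the computation so that all four identities appear as corollaries of a single block calculation, rather than being proved separately, will keep the argument compact.
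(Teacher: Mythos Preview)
Your approach is correct and is exactly the one the paper intends: the corollary is stated without proof, being an immediate consequence of the block form (\ref{2-2}) together with (\ref{Th-4-0-1-1}) and (\ref{1-3-1}), and your block computations fill in precisely those routine verifications. One small citation slip: the explicit form of the core inverse you invoke is formula (\ref{1-3}) from the first lemma, not Lemma~\ref{Lemma-2}.
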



\section{The core-EP order}
\label{Section-core-EP-Order}

%
%
%

A binary
relation on a non-empty set is said to be a pre-order if it is reflexive and
transitive. If it is also anti-symmetric, then it is called a partial order
\cite[Charp 1]{Mitra2010book}.
 As is noted in \cite[Section 3]{Baksalary2010lma}
the core partial order is given:
$$A \mathop \le \limits^ {\tiny\textcircled{\#}} B
:
A,B \in {\mathbb{C} }_{n}^{\mbox{\rm\footnotesize\texttt{CM}}},
 A^ {\tiny\textcircled{\#}}A = A^ {\tiny\textcircled{\#}}B
{\ \rm and\ }
AA^ {\tiny\textcircled{\#}} =BA^ {\tiny\textcircled{\#}}.$$
Some characterizations of the core partial order are given in \cite{Baksalary2010lma,Wang2015lma}.
\begin{lemma}
{\rm\cite{Baksalary2010lma}}
\label{lemma-Bak-2}
Let $A,B \in {\mathbb{C} }_{n}^{\mbox{\rm\footnotesize\texttt{CM}}} $,
and let $A$ be of the form {\rm(\ref{1-2})}.
Then the following conditions are equivalent:
 \begin{enumerate}
   \item[{\rm(i)}]  $A\mathop \le \limits^ {\tiny\textcircled{\#}} B$;
   \item[{\rm(ii)}]  $B = U\left[ {{\begin{matrix}
 T    & S   \\
 0    & Z    \\
\end{matrix} }} \right]U^\ast  $,
where $T$ is nonsingular and $Z \in {\mathbb{C} }_{n - r,n - r} $ is some matrix of index one;
   \item[{\rm(iii)}]  $A^\dag A = A^\dag B$, $A^2 = BA$.
 \end{enumerate}
\end{lemma}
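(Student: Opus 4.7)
The plan is to close the loop (i) $\Rightarrow$ (ii) $\Rightarrow$ (iii) $\Rightarrow$ (i), using the block form (\ref{1-2}) of $A$ together with the explicit formula (\ref{1-3}) for $A^{\tiny\textcircled{\#}}$ as the central computational tool. Throughout, I would write $B = U\left[\begin{smallmatrix} B_1 & B_2 \\ B_3 & B_4 \end{smallmatrix}\right] U^\ast$ conformally with the $(r,\,n-r)$ partition of $A$, so that each scalar identity turns into block equations in the $B_i$.

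For (i) $\Rightarrow$ (ii), direct multiplication gives $AA^{\tiny\textcircled{\#}} = U\left[\begin{smallmatrix} I & 0 \\ 0 & 0 \end{smallmatrix}\right]U^\ast$ and $A^{\tiny\textcircled{\#}}A = U\left[\begin{smallmatrix} I & T^{-1}S \\ 0 & 0 \end{smallmatrix}\right]U^\ast$. Matching $BA^{\tiny\textcircled{\#}}$ with $AA^{\tiny\textcircled{\#}}$ and using invertibility of $T$ forces $B_1 = T$ and $B_3 = 0$; then matching $A^{\tiny\textcircled{\#}}B$ with $A^{\tiny\textcircled{\#}}A$ forces $B_2 = S$. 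Setting $Z := B_4$ produces the shape in (ii). Since $B$ is now block upper triangular with $T$ invertible, ${\rm rk}(B) = r + {\rm rk}(Z)$ and ${\rm rk}(B^2) = r + {\rm rk}(Z^2)$, so the hypothesis $B \in \mathbb{C}^{\mbox{\rm\footnotesize\texttt{CM}}}_n$ yields ${\rm rk}(Z) = {\rm rk}(Z^2)$, i.e.\ $Z$ has index at most one.

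The step (ii) $\Rightarrow$ (iii) is essentially computational: $BA$ yields $U\left[\begin{smallmatrix} T^2 & TS \\ 0 & 0 \end{smallmatrix}\right]U^\ast = A^2$ at once. For $A^{\dag}A = A^{\dag}B$, rather than computing $A^{\dag}$ explicitly, I would observe that the columns of $B - A = U\left[\begin{smallmatrix} 0 & 0 \\ 0 & Z \end{smallmatrix}\right]U^\ast$ all lie in the span of the last $n-r$ columns of $U$, which is exactly $\mathcal{R}(A)^{\perp} = \mathcal{N}(A^\ast) = \mathcal{N}(A^{\dag})$; hence $A^{\dag}(B-A) = 0$. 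For (iii) $\Rightarrow$ (i), parametrize $B$ in block form once more: $A^2 = BA$ and invertibility of $T$ give $B_1 = T$ and $B_3 = 0$, while $A^{\dag}(B-A) = 0$ combined with $\mathcal{N}(A^{\dag}) = \mathcal{R}(A)^{\perp}$ forces $B_2 = S$. This reduces matters to (ii), and (i) then follows from the direct multiplications already carried out in the first step.

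The main obstacle I anticipate is extracting $B_2 = S$ from $A^{\dag}A = A^{\dag}B$ without recourse to an explicit formula for $A^{\dag}$. My plan is to bypass this entirely by identifying $\mathcal{N}(A^{\dag})$ with $\mathcal{R}(A)^{\perp}$, which in the unitary frame is simply the span of the last $n-r$ columns of $U$; with this identification in hand, everything reduces to routine block-matrix bookkeeping.
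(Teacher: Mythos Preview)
The paper does not supply its own proof of this lemma: it is quoted verbatim from \cite{Baksalary2010lma} as a known result, so there is no in-paper argument to compare against. Your block-matrix cycle (i) $\Rightarrow$ (ii) $\Rightarrow$ (iii) $\Rightarrow$ (i) is correct as written. The only place worth tightening is the closing step: when you say ``this reduces matters to (ii)'' after extracting $B_1 = T$, $B_3 = 0$, $B_2 = S$ from (iii), note that the index-one condition on $Z = B_4$ is part of (ii) and comes from the standing hypothesis $B \in \mathbb{C}_n^{\mbox{\rm\footnotesize\texttt{CM}}}$ via the same rank computation you already did in (i) $\Rightarrow$ (ii); alternatively, as you observe, the block multiplications verifying (i) do not use the index of $Z$ at all, so you may pass straight from the block form to (i) without invoking (ii) in full. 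Your device of identifying $\mathcal{N}(A^{\dag})$ with $\mathcal{R}(A)^{\perp}$, which in the unitary frame is the span of the last $n-r$ columns of $U$, is a clean way to avoid writing out $A^{\dag}$ and is exactly the kind of argument used in the original source.
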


It is of interest to consider
  the  binary operation:
\begin{align}
\label{4-1}
A \mathop \le \limits^{\tiny\textcircled{\dag}} B
:
A,B \in {\mathbb{C} }_{n,n},
 A^{\tiny\textcircled{\dag}} A = A^{\tiny\textcircled{\dag}} B
 {\ \rm and\ }
AA^{\tiny\textcircled{\dag}}=BA^{\tiny\textcircled{\dag}}.
\end{align}
We call it the core-EP order.

\begin{remark}
It is noteworthy that
 $A^{\tiny\textcircled{\dag}}=A^{\tiny\textcircled{\#}}$
 and
$ B^{\tiny\textcircled{\dag}}=B^{\tiny\textcircled{\#}}$,
 when $A$ and $B
 \in {\mathbb{C} }_{n}^{\mbox{\rm\footnotesize\texttt{CM}}}$.
Therefore,
  the core-EP order and  the core partial order coincide
  in ${\mathbb{C} }_{n}^{\mbox{\rm\footnotesize\texttt{CM}}}$.
  \end{remark}

In the following theorem,
we derive some characterizations of the  binary operation.

\begin{theorem}
\label{Theorem-4-1}
Let $A$ and $B$ be square matrices of the same order over the complex field.
Then the following are equivalent:

\begin{itemize}
  \item[{\rm(i)}]
   $A\mathop \le \limits^{\tiny\textcircled{\dag}} B:A^{\tiny\textcircled{\dag}}A
= A^{\tiny\textcircled{\dag}} B,
\
AA^{\tiny\textcircled{\dag}} = BA^{\tiny\textcircled{\dag}}$;

  \item[{\rm(ii)}]
  There exists a  unitary matrix $U$ such that
\begin{align}
\label{4-2}
A = U\left[ {{\begin{matrix}
 {T_1 }   & {T_2 }   & {S_1 }   \\
 0   & {N_{11} }   & {N_{12} }   \\
 0   & {N_{13} }   & {N_{14} }   \\
\end{matrix} }} \right]U^\ast  ,B = U\left[ {{\begin{matrix}
 {T_1 }   & {T_2 }   & {S_1 }   \\
 0   & {T_3 }   & {S_2 }   \\
 0   & 0   & {N_2 }   \\
\end{matrix} }} \right]U^\ast ,
 \end{align}
where
$\left[ {{\begin{matrix}
 {N_{11} }   & {N_{12} }   \\
 {N_{13} }   & {N_{14} }   \\
\end{matrix} }} \right]$ and $N_2 $ are nilpotent,  $T_1$ and $T_3 $ are  non-singular;

  \item[{\rm(iii)}]
  $A^{k + 1} = BA^k$, $A^\ast  A^k = B^\ast  A^k$, where $k$ is the index of $A$;

  \item[{\rm(iv)}]
$VA V^\ast\mathop \le \limits^{\tiny\textcircled{\dag}} VBV^\ast$
for each unitary matrix $V$;

  \item[{\rm(v)}]
  $A_1 \mathop \le \limits^ {\tiny\textcircled{\#}} B_1$,
 where
$ A_1$  and  $ B_1$ are  core-invertible,
$A_2$ and  $ B_2$ are   nilpotent,
 and  $A = A_1 + A_2 $ and
 $B= B_1 + B_2 $ be the core-EP decompositions of $A$ and $B$,respectively.
\end{itemize}
\end{theorem}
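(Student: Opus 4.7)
The plan is to close the cycle $(i) \Rightarrow (ii) \Rightarrow (iii) \Rightarrow (i)$, treat $(i) \Leftrightarrow (iv)$ as an immediate consequence of unitary covariance of the core-EP inverse, and obtain $(ii) \Leftrightarrow (v)$ by reading the core-EP decompositions of $A$ and $B$ directly off the block form. Unitary covariance, $(VAV^*)^{\textcircled{\dag}} = V A^{\textcircled{\dag}} V^*$, follows at once from the block formula in Corollary \ref{Theorem-4-0-1}, which makes $(i) \Leftrightarrow (iv)$ trivial.

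For $(i) \Rightarrow (ii)$, I would use Theorem \ref{Theorem-2-2} to write $A$ under a unitary $U$ in the $2 \times 2$ block form with nonsingular top-left $T_1$ and nilpotent bottom-right $N_A$, so that by Corollary \ref{Theorem-4-0-1}, $A^{\textcircled{\dag}}$ is block-diagonal with entry $T_1^{-1}$. Expanding $B$ in the same basis, the relation $AA^{\textcircled{\dag}} = BA^{\textcircled{\dag}}$ kills the bottom-left block of $B$ and pins its top-left block to $T_1$, while $A^{\textcircled{\dag}} A = A^{\textcircled{\dag}} B$ forces the top-right block of $B$ to coincide with that of $A$. The still-unconstrained lower-right block of $B$ has its own core-EP decomposition via Theorem \ref{core-EP-Decomposition}; conjugating by a block unitary $\mathrm{diag}(I, V)$ that realizes it produces the $3 \times 3$ pattern for $B$ asserted in (ii), and the matching conjugation sends $N_A$ to a unitarily similar (hence still nilpotent) matrix, split as the required $2 \times 2$ nilpotent block of $A$.

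For $(ii) \Rightarrow (iii)$, a short induction shows $A^k$ is block-upper-triangular with $T_1^k$ on top and zero lower-right (the relevant nilpotent block has index at most $k$), so multiplying on the left by $A$ or by $B$ yields the same result since $A$ and $B$ share their first block row. The identity $A^*A^k = B^*A^k$ drops out identically: $A^*$ and $B^*$ share their first block column, and the bottom block row of $A^k$ vanishes, so the entries where $A^*$ and $B^*$ might differ are annihilated. For $(iii) \Rightarrow (i)$ the key observation, recorded in Corollary \ref{Cor-3-1}, is that $AA^{\textcircled{\dag}} = A^k(A^k)^\dag$ is the orthogonal projector onto $\mathcal{R}(A^k)$. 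Taking adjoints in $A^*A^k = B^*A^k$ gives $(A^k)^*(A-B) = 0$, so the columns of $A-B$ lie in $\mathcal{R}(A^k)^\perp$ and $AA^{\textcircled{\dag}}(A-B) = 0$; substituting $A^{\textcircled{\dag}} = A^{\textcircled{\dag}} AA^{\textcircled{\dag}}$ propagates this to $A^{\textcircled{\dag}}(A-B) = 0$. For the second identity, $BA^{\textcircled{\dag}} = BA^k(A^{k+1})^{\textcircled{\#}} = A^{k+1}(A^{k+1})^{\textcircled{\#}} = AA^{\textcircled{\dag}}$, again via Corollary \ref{Cor-3-1}.

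Finally, for $(ii) \Leftrightarrow (v)$, from (ii) I would read off the core-EP decompositions: the core part of $A$ is obtained by zeroing the nilpotent lower $2 \times 2$ block, and the core part of $B$ by zeroing $N_2$, leaving $B_1$ precisely in the block shape of Lemma \ref{lemma-Bak-2}(ii) with the index-at-most-one block there played by the submatrix containing $T_3$; this gives $A_1 \leq^{\textcircled{\#}} B_1$. For the converse, Lemma \ref{lemma-Bak-2} furnishes a unitary under which $A_1$ and $B_1$ share a common top block with an index-one residue $Z$ in $B_1$; applying Hartwig's core decomposition to $Z$ and conjugating by a secondary block unitary $\mathrm{diag}(I, W)$ delivers the full $3 \times 3$ pattern, and the orthogonality conditions $A_1^*A_2 = A_2 A_1 = 0$ and $B_1^*B_2 = B_2 B_1 = 0$ from Theorem \ref{core-EP-Decomposition} then pin $A_2$ and $B_2$ into exactly the positions demanded by (ii). The step I expect to be most delicate is this two-stage assembly of the unitary (both in $(i) \Rightarrow (ii)$ and in $(v) \Rightarrow (ii)$): one has to check that the refinement trigonalizing the secondary block does not disturb the already-established block structure, which works only because the refining unitary has the form $\mathrm{diag}(I, V)$ and acts trivially on the first block.
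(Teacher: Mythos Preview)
Your proposal is correct and follows essentially the same architecture as the paper: the cycle $(i)\Rightarrow(ii)\Rightarrow(iii)\Rightarrow(i)$, the trivial link to $(iv)$, and $(ii)\Leftrightarrow(v)$ via Lemma~\ref{lemma-Bak-2}, with the two-stage unitary refinement you flag handled exactly as in the paper. One small deviation worth noting: in $(iii)\Rightarrow(i)$ you derive $A^{\textcircled{\dag}}A=A^{\textcircled{\dag}}B$ by recognizing $AA^{\textcircled{\dag}}=A^k(A^k)^\dag$ as the orthogonal projector onto $\mathcal{R}(A^k)$ and annihilating $A-B$ directly, whereas the paper instead passes through $A^\ast A^{\textcircled{\dag}}=B^\ast A^{\textcircled{\dag}}$ and then pre-multiplies by $A^{\textcircled{\dag}}\bigl((A^{\textcircled{\dag}})^\ast\bigr)^{\textcircled{\#}}$; your route is slightly more transparent but otherwise equivalent.
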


\begin{proof}
(i) $\Rightarrow$ (ii). \ \
Let
\begin{align*}
A = U^\ast  \left[ {{\begin{matrix}
 {T_1 }   & {\widehat{T}_2 }   & {\widehat{S}_1 }   \\
 0   & {\widehat{N}_{11} }   & {\widehat{N}_{12} }   \\
 0   & {\widehat{N}_{13} }   & {\widehat{N}_{14} }   \\
\end{matrix} }} \right]U_1^\ast  ,
 \end{align*}
be a  core-EP decomposition of $A$,
where $T_1 $ is non-singular,
$\left[ {{\begin{matrix}
 {\widehat{N}_{11} }   & {\widehat{N}_{12} }   \\
 {\widehat{N}_{13} }   & {\widehat{N}_{14} }   \\
\end{matrix} }} \right]$ is  nilpotent,
and  $U$ is unitary.
Then
\begin{align*}
A^{\tiny\textcircled{\dag}} = U_1 \left[ {{\begin{matrix}
 {T_1^{ - 1} }   & 0   & 0   \\
 0   & 0   & 0   \\
 0   & 0   & 0   \\
\end{matrix} }} \right]U_1^\ast  .
 \end{align*}
Write
\begin{align*}
B = U_1 \left[ {{\begin{matrix}
 {X_{11} }   & {X_{12} }   & {X_{13} }   \\
 {X_{21} }   & {X_{22} }   & {X_{23} }   \\
 {X_{31} }   & {X_{32} }   & {X_{33} }   \\
\end{matrix} }} \right]U_1^\ast  .
 \end{align*}
Since,
\begin{align*}
 A^{\tiny\textcircled{\dag}}A
 &
  = U_1 \left[{{\begin{matrix}
 I   & {T_1^{ - 1} \widehat{T}_2 }   & {\widehat{T}_1^{ - 1}
\widehat{S}_1 }   \\
 0   & 0   & 0   \\
 0   & 0   & 0   \\
\end{matrix} }} \right]U_1^\ast  = A^{\tiny\textcircled{\dag}}B = U_1 \left[
{{\begin{matrix}
 {T_1^{ - 1} X_{11} }   & {T_1^{ - 1} X_{12} }   & {T_1^{ - 1}
X_{13} }   \\
 0   & 0   & 0   \\
 0   & 0   & 0   \\
\end{matrix} }} \right]U_1^\ast  \\
 &
\Rightarrow X_{11} = T_1 ,X_{12} = \widehat{T}_2 ,X_{13} = \widehat{S}_1 ;
\\
 AA^{\tiny\textcircled{\dag}}
 &
 = U_1 \left[{{\begin{matrix}
 I   & 0   & 0   \\
 0   & 0   & 0   \\
 0   & 0   & 0   \\
\end{matrix} }} \right]U_1^\ast  = BA^{\tiny\textcircled{\dag}} = U_1 \left[
{{\begin{matrix}
 I   & 0   & 0   \\
 {X_{21} T_1^{ - 1} }   & 0   & 0   \\
 {X_{31} T_1^{ - 1} }   & 0   & 0   \\
\end{matrix} }} \right]U_1^\ast  \\
 &
 \Rightarrow X_3 = 0,X_4 = 0,
 \end{align*}
we have
\[
B = U_1 \left[ {{\begin{matrix}
 {T_1 }   & {\widehat{T}_2 }   & {\widehat{S}_1 }   \\
 0   & {X_{22} }   & {X_{23} }   \\
 0   & {X_{32} }   & {X_{33} }   \\
\end{matrix} }} \right]U_1^\ast  .
\]

Let
\begin{align*}
\left[{{\begin{matrix}
 {X_{22} }   & {X_{23} }   \\
 {X_{32} }   & {X_{33} }   \\
\end{matrix} }} \right]= U_2 \left[{{\begin{matrix}
 {T_2 }   & {S_2 }   \\
 0   & {N_2 }   \\
\end{matrix} }} \right]U_2^\ast  ,
\end{align*}
be a  core-EP decomposition of $\left[ {{\begin{smallmatrix}
 {X_{22} }   & {X_{23} }   \\
 {X_{32} }   & {X_{33} }   \\
\end{smallmatrix} }} \right]$,
where $T_2 $ is non-singular,
$N_2$ is  nilpotent,
and  $U_2$ is unitary.

Denote
\begin{align*}U = U_1 \left[ {{\begin{matrix}
 I   & 0   \\
 0   & {U_2 }   \\
\end{matrix} }} \right], \
\
\left[{{\begin{matrix}
 {N_{11} }   & {N_{12} }   \\
 {N_{13} }   & {N_{14} }   \\
\end{matrix} }} \right]
=
U_2^\ast  \left[ {{\begin{matrix}
 {\widehat{N}_{11} }   & {\widehat{N}_{12} }   \\
 {\widehat{N}_{13} }   & {\widehat{N}_{14} }   \\
\end{matrix} }} \right]U_2   \end{align*}
 {and  }
 $ \left[ {{\begin{matrix}
 {\widehat{T}_2 }   & {\widehat{S}_1 }   \\
\end{matrix} }} \right]U_2 = \left[ {{\begin{matrix}
 {T_2 }   & {S_1 }   \\
\end{matrix} }} \right]$,
we have
\begin{align*}
A = U\left[ {{\begin{matrix}
 {T_1 }   & {T_2 }   & {S_1 }   \\
 0   & {N_{11} }   & {N_{12} }   \\
 0   & {N_{13} }   & {N_{14} }   \\
\end{matrix} }} \right]U^\ast  ,\quad
B = U\left[{{\begin{matrix}
 {T_1 }   & {T_2 }   & {S_1 }   \\
 0   & {T_3 }   & {S_2 }   \\
 0   & 0   & {N_2 }   \\
\end{matrix} }} \right]U^\ast,
\end{align*}
and
$\left[{{\begin{matrix}
 {N_{11} }   & {N_{12} }   \\
 {N_{13} }   & {N_{14} }   \\
\end{matrix} }} \right]$
is nilpotent.

(ii) $\Rightarrow$ (i) is easy.

(ii) $\Rightarrow$ (iii). \ \
By applying (\ref{4-2}),
we have
 $A^k = U\left[{{\begin{matrix}
 {T_1^k }   & {\overline T _2 }   & {\overline S _1 }   \\
 0   & 0   & 0   \\
 0   & 0   & 0   \\
\end{matrix} }} \right]U^\ast$,
where
${\overline T _2 }  $
and
${\overline S _1 }$
are both corresponding matrices.
It is easy to obtain
\begin{align*}
A^{k + 1}
 &
 = U\left[ {{\begin{matrix}
 {T_1 T_1^k }   & {T_1 \overline T _2 }   & {T_1 \overline S _1 }
  \\
 0   & 0   & 0   \\
 0   & 0   & 0   \\
\end{matrix} }} \right]U^\ast  = BA^k,
\\
A^\ast  A^k
 &
  = U\left[ {{\begin{matrix}
 {T_1^\ast  T_1^k }   & {T_1^\ast  \overline T _2 }   & {T_1^\ast
\overline S _1 }   \\
 {T_2^\ast  T_1^k }   & {T_2^\ast  \overline T _2 }   & {T_2^\ast
\overline S _1 }   \\
 {S_1^\ast  T_1^k }   & {S_1^\ast  \overline T _2 }   & {S_1^\ast
\overline S _1 }   \\
\end{matrix} }} \right]U^\ast  = B^\ast  A^k .
\end{align*}

(iii) $\Rightarrow$ (i). \ \
By applying (\ref{3-2}),
we obtain  
\begin{align*}
 AA^k
 =
 BA^k
 &
 \Rightarrow AA^k\left( {A^{k + 1}} \right)^{\tiny\textcircled{\#}} = BA^k\left(
{A^{k + 1}} \right)^{\tiny\textcircled{\#}}
\\
&
\Rightarrow AA^{\tiny\textcircled{\dag}} = BA^{\tiny\textcircled{\dag}},
\\
 A^\ast  A^k
 = B^\ast  A^k
 &
 \Rightarrow A^\ast  A^k\left( {A^{k + 1}} \right)^{\tiny\textcircled{\#}} =
B^\ast  A^k\left( {A^{k + 1}} \right)^{\tiny\textcircled{\#}} \Rightarrow A^\ast  A^{\tiny\textcircled{\dag}}
= B^\ast  A^{\tiny\textcircled{\dag}} 
 \\
 &
 \Rightarrow A^{\tiny\textcircled{\dag}}
 \left( {\left( {A^{\tiny\textcircled{\dag}}} \right)^\ast  } \right)^{\tiny\textcircled{\#}} \left( {A^{\tiny\textcircled{\dag}}} \right)^\ast  A =
A^{\tiny\textcircled{\dag}}\left( {\left( {A^{\tiny\textcircled{\dag}}} \right)^\ast  }
\right)^{\tiny\textcircled{\#}} \left( {A^{\tiny\textcircled{\dag}}} \right)^\ast  B
\\
 &
 \Rightarrow A^{\tiny\textcircled{\dag}}A = A^{\tiny\textcircled{\dag}}B.
 \end{align*}

(iii) $\Leftrightarrow$ (iv) is easy.

 (ii) $\Leftrightarrow$ (v).\
 Let   $A = A_1 + A_2 $
 and
 $B= B_1 + B_2 $ be the core-EP decompositions
of $A$ and $B\in\mathbb{C}_{n,n}$, respectively,
 where  $ A_1$  and  $ B_1$ are  core-invertible,
 and $A_2$ and  $ B_2$ are   nilpotent.
Then
 $A^{\tiny\textcircled{\dag}}= A_1^{\tiny\textcircled{\#}}  $
 and
 $B^{\tiny\textcircled{\dag}}=B_1^{\tiny\textcircled{\#}}  $.
By applying Lemma \ref{lemma-Bak-2},
we have (ii) $\Leftrightarrow$ (v).
\end{proof}

  By applying Theorem \ref{Theorem-4-1},
we see that
the  binary operation  is reflexive and transitive,
that is,
  the core-EP order
is a pre-order.
But the core-EP order is not  antisymmetric.

\begin{example}
\label{Ex-1}
Let
\begin{align*}
A =  \left[ {{\begin{matrix}
 1   & 2   & 3 \\
 0   & 0   & 0   \\
 0   & 0   & 0  \\
\end{matrix} }} \right] ,\quad
B =\left[{{\begin{matrix}
 1   & 2   & 3  \\
 0   & 0   & 1 \\
 0   & 0   & 0 \\
\end{matrix} }} \right] ,
\end{align*}
Then
$A\mathop \le \limits^{\tiny\textcircled{\dag}} B $
and
$B\mathop \le \limits^{\tiny\textcircled{\dag}} A$.
However, $A \neq B$.
\end{example}

\begin{theorem}
\label{Theorem-5-4}
The core-EP order  is  only a pre-order and
not a partial order.
 \end{theorem}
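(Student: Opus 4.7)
The plan is to verify reflexivity and transitivity by routing everything through the equivalences established in Theorem~\ref{Theorem-4-1}, and then invoke Example~\ref{Ex-1} to rule out antisymmetry. Nothing genuinely new is required beyond what has already been set up.

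First I would dispatch reflexivity directly from the definition in~(\ref{4-1}): the identities $A^{\tiny\textcircled{\dag}} A = A^{\tiny\textcircled{\dag}} A$ and $AA^{\tiny\textcircled{\dag}} = AA^{\tiny\textcircled{\dag}}$ are tautological, so $A \mathop{\le}\limits^{\tiny\textcircled{\dag}} A$ for every square matrix $A$.

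For transitivity, suppose $A \mathop{\le}\limits^{\tiny\textcircled{\dag}} B$ and $B \mathop{\le}\limits^{\tiny\textcircled{\dag}} C$, with core-EP decompositions $A = A_1 + A_2$, $B = B_1 + B_2$, $C = C_1 + C_2$. By the equivalence (i)$\Leftrightarrow$(v) in Theorem~\ref{Theorem-4-1}, this reduces to $A_1 \mathop{\le}\limits^{\tiny\textcircled{\#}} B_1$ and $B_1 \mathop{\le}\limits^{\tiny\textcircled{\#}} C_1$ in $\mathbb{C}_n^{\mbox{\rm\footnotesize\texttt{CM}}}$. Since the core partial order is a genuine partial order (in particular transitive), one concludes $A_1 \mathop{\le}\limits^{\tiny\textcircled{\#}} C_1$, and applying the same equivalence in the reverse direction yields $A \mathop{\le}\limits^{\tiny\textcircled{\dag}} C$. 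So the binary relation in~(\ref{4-1}) is a pre-order.

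Finally, to show the relation fails to be antisymmetric, I would simply point at Example~\ref{Ex-1}: the matrices $A$ and $B$ displayed there satisfy both $A \mathop{\le}\limits^{\tiny\textcircled{\dag}} B$ and $B \mathop{\le}\limits^{\tiny\textcircled{\dag}} A$ while $A \ne B$. Concretely, one checks that $\mathrm{Ind}(A) = 1$, so $A_1 = A$, and that $\mathrm{Ind}(B) = 2$ with $B_1 = B^2(B^2)^{\tiny\textcircled{\#}} B = A$; hence both core-EP decompositions share the same core part, and the two comparisons reduce under Theorem~\ref{Theorem-4-1}(v) to the trivial $A_1 \mathop{\le}\limits^{\tiny\textcircled{\#}} A_1$. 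The only step that requires any care is the last one — verifying that the core parts of $A$ and $B$ really do coincide — but this is a direct computation using the definitions in~(\ref{2-1}). Combining the three items proves that the core-EP order is a pre-order but not a partial order.
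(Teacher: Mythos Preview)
Your proposal is correct and matches the paper's own approach: the paper likewise establishes the pre-order property by appealing to Theorem~\ref{Theorem-4-1} (your use of the equivalence (i)$\Leftrightarrow$(v) to import transitivity from the core partial order is exactly the intended mechanism), and then invokes Example~\ref{Ex-1} to defeat antisymmetry. Your added verification that $A_1 = B_1 = A$ in Example~\ref{Ex-1} is a welcome elaboration of a step the paper leaves implicit.
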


It is well known that
the Drazin order is a pre-order:
$A\mathop \le \limits^{D} B:A^{k}B =B A^{k} = A^{k+1}$,
where $k$ is the index of $A$.
In the following example,
we see that the Drazin order 
and
the core-EP order are not equivalent.

\begin{example}
\label{Ex-1-1}
Let  $A$ and $B$ be as in Example (\ref{Ex-1}).
Then
${\rm Ind}(A)=1$
and
$AB\neq BA$.
Therefore,
$A$ and $B$ do not satisfy  $A \mathop \le \limits^{D} B$.
\end{example}

\section{The core-minus partial order }
\label{Section-Core-Minus-Order}

Creating new partial orders is
a  fundamental  problem in matrix theory \cite{Mitra1992laa}.
Matrix decomposition is an important tool of establishing   partial orders.
It is well known that
the C-N partial order was created by Mitra and Hartwig,
and it implies the minus partial order \cite{Mitra1992laa,Mitra2010book}.
  The  minus partial order ``$\ \mathop \le \limits^ - \ $'' ,
  the sharp partial order  ``$\ \mathop \le\limits^\#  \ $''
  and
  the C-N partial order ``$\ \mathop \le \limits^ {\# , - }\ $''
are defined as follows {\rm \cite{Hartwig1980mj,Mitra1987laa,Mitra2010book}}:
\begin{itemize}
  \item[{\rm(i)}]
$A \mathop \le \limits^ - B$
:
$A,B\in \mathbb{C}_{m, n}$,
${\rm rk}(B)-{\rm rk}(A)={\rm rk}(B-A)$;

  \item[{\rm(ii)}]
   $A  \mathop \le \limits^  {\#} B$
   :
   $A,B \in \mathbb{C}_n^{\mbox{\rm\footnotesize\texttt{CM}}}$,
$A^\# A = A^\# B$
and
$ AA^\# = BA^\# $;

  \item[{\rm(iii)}]
$A  \mathop \le \limits^ {\# , - }B$
:
$A,B\in \mathbb{C}_{n, n}$,
 $A_1 \mathop \le \limits^\# B_1 $
and $A_2 \mathop \le \limits^ -  B_2 $,
in which  $A = A_1 + A_2 $ and $B = B_1 + B_2 $
are the core-nilpotent decompositions of $A$ and $B$, respectively. 
\end{itemize}

By using the method
similar to  the C-N partial ordering
as in (iii) ,
we introduce the core-minus partial order in this section.

\begin{definition}
\label{Definition-5-1}
Let $A$ and $B$ be matrices of the same order. Let
$
A = A_1 + A_2 $ and $ B = B_1 + B_2 ,
$
be the core-EP decompositions of $A$ and $B$ respectively,
 where
 $ A_1$ and $B_1$ are   core-invertible,
 $A_2$ and $B_2$ are nilpotent.
Then $A$ is below $B$  under
the core-minus order if
\begin{align}
\label{5-1}
A_1 \mathop \le \limits^{\tiny\textcircled{\#}} B_1 ,
\ \
A_2 \mathop \le \limits^ - B_2 ,
 \end{align}
 Whenever this happens,
  we write
  $A\mathop \le \limits^{\mbox{\rm\footnotesize\texttt{CM}}} B$.
\end{definition}

Since the core-EP decomposition
of a given matrix is unique,
and the core order and the minus order
are both partial orders,
it is easy to prove the following theorem:
\begin{theorem}
\label{Theorem-5-0}
The core-minus order  is a partial order.
 \end{theorem}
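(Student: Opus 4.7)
The plan is to verify the three defining properties of a partial order---reflexivity, antisymmetry, and transitivity---for $\le^{\mbox{\rm\footnotesize\texttt{CM}}}$, and to emphasize why each of these reductions is legitimate. The setup of Definition \ref{Definition-5-1} is engineered precisely so that the verification reduces componentwise to facts we already have in hand: the uniqueness of the core-EP decomposition (Theorem \ref{Theorem-3-6}), the fact that the core partial order $\le^{\tiny\textcircled{\#}}$ is a partial order on $\mathbb{C}_n^{\mbox{\rm\footnotesize\texttt{CM}}}$, and the fact that the minus partial order $\le^-$ is a partial order on $\mathbb{C}_{n,n}$.

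First I would observe that for any $A\in\mathbb{C}_{n,n}$ the core-EP decomposition $A=A_1+A_2$ exists by Theorem \ref{core-EP-Decomposition} and is unique by Theorem \ref{Theorem-3-6}. This uniqueness is what makes the definition of $\le^{\mbox{\rm\footnotesize\texttt{CM}}}$ unambiguous: the pair $(A_1,A_2)$ attached to $A$ is intrinsic, so conditions (\ref{5-1}) do not depend on any choice.

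For reflexivity, since $A_1\mathop \le\limits^{\tiny\textcircled{\#}} A_1$ and $A_2\mathop \le\limits^- A_2$ always hold, we immediately get $A\mathop \le\limits^{\mbox{\rm\footnotesize\texttt{CM}}} A$. For antisymmetry, assume $A\mathop \le\limits^{\mbox{\rm\footnotesize\texttt{CM}}} B$ and $B\mathop \le\limits^{\mbox{\rm\footnotesize\texttt{CM}}} A$. Then $A_1\mathop \le\limits^{\tiny\textcircled{\#}} B_1$ with $B_1\mathop \le\limits^{\tiny\textcircled{\#}} A_1$ forces $A_1=B_1$ by antisymmetry of the core partial order, and similarly $A_2=B_2$ by antisymmetry of the minus partial order; summing gives $A=B$. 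For transitivity, chaining $A\mathop \le\limits^{\mbox{\rm\footnotesize\texttt{CM}}} B\mathop \le\limits^{\mbox{\rm\footnotesize\texttt{CM}}} C$ through the two coordinates yields $A_1\mathop \le\limits^{\tiny\textcircled{\#}} C_1$ and $A_2\mathop \le\limits^- C_2$ by transitivity of each partial order, hence $A\mathop \le\limits^{\mbox{\rm\footnotesize\texttt{CM}}} C$.

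The one point that genuinely needs the uniqueness statement, and is the closest thing to an obstacle, is to make sure the componentwise argument is not circular: if the decomposition were only unique up to some non-trivial equivalence, then ``$A_1=B_1$'' in the antisymmetry step would not immediately give $A=B$. Because Theorem \ref{Theorem-3-6} supplies genuine uniqueness, this worry disappears, and the proof amounts to a short bookkeeping argument as sketched above.
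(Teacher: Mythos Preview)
Your argument is correct and is exactly the approach the paper has in mind: the paper itself merely remarks that since the core-EP decomposition is unique and both the core and minus orders are partial orders, the result is immediate. You have simply written out the componentwise reflexivity, antisymmetry, and transitivity that this remark summarizes.
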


\begin{remark}
\label{Remark-5-1}
 When $k\leq 1$,
it is easy to check that
each of the core-minus partial
 coincide with
the core partial order\cite[Definition 2]{Baksalary2010lma}.
\end{remark}

\begin{theorem}
\label{Theorem-5-1}
The core-minus partial order  implies the  minus partial order
i.e.,
if
$A\mathop \le \limits^{\mbox{\rm\footnotesize\texttt{CM}}} B$,
then
$A\mathop \le \limits^{-} B$.
 \end{theorem}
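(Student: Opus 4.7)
The plan is to place $A$ and $B$ into a common block upper-triangular form adapted to $B_1$, and then reduce the desired rank equation ${\rm rk}(B-A)={\rm rk}(B)-{\rm rk}(A)$ to a rank-additivity statement under mixed orthogonality conditions. Since $A_1\mathop\le\limits^{\tiny\textcircled{\#}} B_1$ with $A_1,B_1$ core-invertible, Lemma~\ref{lemma-Bak-2} supplies a unitary $U$ in which $A_1$ is block upper-triangular with a nonsingular $r\times r$ leading block $T$ and zero trailing block, while $B_1$ has the same form with an index-one matrix $Z$ in place of the trailing zero. Repeating the block calculation from the proof of Theorem~\ref{Theorem-2-2} in this common basis, the conditions $A_1^\ast A_2=A_2A_1=0$ and $B_1^\ast B_2=B_2B_1=0$ force $A_2$ and $B_2$ to be supported in the trailing $(n-r)\times(n-r)$ block, with nilpotent entries $N$ and $Y$, and additionally produce the orthogonality relations $Z^\ast Y=0$ and $YZ=0$. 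Consequently ${\rm rk}(A)=r+{\rm rk}(N)$, ${\rm rk}(B)=r+{\rm rk}(Z+Y)$, and ${\rm rk}(B-A)={\rm rk}((Z+Y)-N)$.

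The main technical step is a rank-additivity lemma: if $K$ has index at most one and $M$ satisfies $K^\ast M=0$ and $MK=0$, then ${\rm rk}(K+M)={\rm rk}(K)+{\rm rk}(M)$. I would prove it by expanding $(K+M)^\ast(K+M)=K^\ast K+M^\ast M$ (using $K^\ast M=0$ and its adjoint $M^\ast K=0$), applying the standard fact that ${\rm rk}(P+Q)=\dim(\mathcal{R}(P)+\mathcal{R}(Q))$ for positive semidefinite $P,Q$, and noting that the index-one hypothesis gives $\mathcal{R}(K^\ast)\cap\mathcal{R}(K)^\perp=0$, while $MK=0$ places $\mathcal{R}(M^\ast)$ inside $\mathcal{R}(K)^\perp$; hence $\mathcal{R}(K^\ast)\cap\mathcal{R}(M^\ast)=0$ and the ranks add.

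Applying the lemma with $(K,M)=(Z,Y)$ yields ${\rm rk}(Z+Y)={\rm rk}(Z)+{\rm rk}(Y)$. To apply it with $(K,M)=(Z,Y-N)$ I also need $Z^\ast N=0$ and $NZ=0$; these follow because $A_2\mathop\le\limits^- B_2$ forces the range inclusions $\mathcal{R}(N)\subseteq\mathcal{R}(Y)$ and $\mathcal{R}(N^\ast)\subseteq\mathcal{R}(Y^\ast)$, so the orthogonality already enjoyed by $Y$ relative to $Z$ passes automatically to $N$. Combining both applications with the hypothesis ${\rm rk}(Y-N)={\rm rk}(Y)-{\rm rk}(N)$ yields ${\rm rk}(B-A)={\rm rk}(B)-{\rm rk}(A)$, which is $A\mathop\le\limits^- B$. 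The main obstacle is the rank-additivity lemma: the index-one hypothesis on $K$ is genuinely needed to exclude intersection of $\mathcal{R}(K^\ast)$ with $\mathcal{R}(M^\ast)$, and recognizing that the core part $B_1$ supplies exactly this property is the conceptual key.
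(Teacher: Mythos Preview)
Your argument is correct. The paper states Theorem~\ref{Theorem-5-1} without proof; the intended justification is the structure theorem that follows it, Theorem~\ref{Theorem-5-2}. From the $3\times 3$ block form~(\ref{5-2}) one reads off directly that ${\rm rk}(A)={\rm rk}(T_1)+{\rm rk}(N_1)$, ${\rm rk}(B)={\rm rk}(T_1)+{\rm rk}(T_3)+{\rm rk}(N_2)$ and ${\rm rk}(B-A)={\rm rk}(T_3)+{\rm rk}(N_2-N_1)$, and the hypothesis $N_1\mathop\le\limits^- N_2$ closes the rank equation immediately.

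Your route is genuinely different: you stop at the coarser $2\times 2$ decomposition coming from Lemma~\ref{lemma-Bak-2} and replace the second unitary refinement (which the paper uses to separate $T_3$ from $N_2$) by your rank-additivity lemma for an index-one $K$ under the mixed orthogonality $K^\ast M=0$, $MK=0$. That lemma is exactly the rank content of the second core-EP decomposition, extracted without performing the decomposition; your proof of it via $(K+M)^\ast(K+M)=K^\ast K+M^\ast M$ and $\mathcal{R}(K^\ast)\cap\mathcal{R}(K)^\perp=\{0\}$ is sound (the last identity is equivalent to $\mathcal{R}(K)\cap\ker(K)=\{0\}$ by taking orthogonal complements). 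The transfer of $Z^\ast Y=0$, $YZ=0$ to $Z^\ast N=0$, $NZ=0$ via the range inclusions $\mathcal{R}(N)\subseteq\mathcal{R}(Y)$ and $\mathcal{R}(N^\ast)\subseteq\mathcal{R}(Y^\ast)$, which indeed follow from $N\mathop\le\limits^- Y$, is also correct. The paper's approach is shorter once Theorem~\ref{Theorem-5-2} is in hand; yours is self-contained and isolates the underlying rank principle, which is a nice conceptual gain.
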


\begin{theorem}
\label{Theorem-5-2}
Let $A$ and $B$ be square matrices of the same order over the complex field.
Then $A\mathop \le \limits^{\mbox{\rm\footnotesize\texttt{CM}}}B$
if and only if
there exists a  unitary matrix $U$
such that
\begin{align}
\label{5-2}
A = U\left[ {{\begin{matrix}
 {T_1 }   & {T_2 }   & {S_1 }   \\
 0   & 0   & 0   \\
 0   & 0   & {N_1 }   \\
\end{matrix} }} \right]U^\ast  ,\
B = U\left[ {{\begin{matrix}
 {T_1 }   & {T_2 }   & {S_1 }   \\
 0   & {T_3 }   & {S_2 }   \\
 0   & 0   & {N_2 }   \\
\end{matrix} }} \right]U^\ast  ,
 \end{align}
where  $T_1 $ and $T_3  $ are non-singular,
 $N_1 $ and $N_2 $ are nilpotent satisfying
 $N_1 \mathop \le \limits^ - N_2  $.
 \end{theorem}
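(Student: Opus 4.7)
The plan is to prove both directions, with the nontrivial one being necessity. The argument combines Theorem \ref{Theorem-2-2} (the simultaneous block form of the core-EP decomposition), Lemma \ref{lemma-Bak-2} (Baksalary--Trenkler's characterization of the core partial order), and the range-inclusion consequence of the minus partial order.

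For sufficiency, I would read off from (\ref{5-2}) the candidates
\begin{align*}
A_1 = U\begin{bmatrix} T_1 & T_2 & S_1 \\ 0 & 0 & 0 \\ 0 & 0 & 0 \end{bmatrix}U^\ast,\qquad A_2 = U\begin{bmatrix} 0 & 0 & 0 \\ 0 & 0 & 0 \\ 0 & 0 & N_1 \end{bmatrix}U^\ast,
\end{align*}
and the analogous split $B=B_1+B_2$, in which the core part $B_1$ collects the upper-triangular nonsingular top-left $(r+r')\times(r+r')$ block built from $T_1,T_2,T_3$ together with the right column $\left[\begin{smallmatrix}S_1\\S_2\end{smallmatrix}\right]$. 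A routine block computation verifies conditions (i)--(iii) of Theorem \ref{core-EP-Decomposition}, and by the uniqueness in Theorem \ref{Theorem-3-6} these are the (unique) core-EP decompositions of $A$ and $B$. Lemma \ref{lemma-Bak-2}, applied with $Z=\left[\begin{smallmatrix} T_3 & S_2 \\ 0 & 0 \end{smallmatrix}\right]$ (of index one, since $T_3$ is nonsingular), delivers $A_1\mathop{\le}\limits^{\tiny\textcircled{\#}}B_1$; and $A_2\mathop{\le}\limits^{-}B_2$ collapses to $N_1\mathop{\le}\limits^{-}N_2$ via the rank identity, since all nonzero blocks of $A_2$ and $B_2$ sit in the bottom-right corner.

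For necessity, Theorem \ref{Theorem-2-2} gives a unitary $U_1$ for which $A_1=U_1\left[\begin{smallmatrix} T_1 & \widetilde S \\ 0 & 0 \end{smallmatrix}\right]U_1^\ast$, $T_1$ nonsingular, and $A_2=U_1\left[\begin{smallmatrix} 0 & 0 \\ 0 & \widetilde N \end{smallmatrix}\right]U_1^\ast$, $\widetilde N$ nilpotent. The hypothesis $A_1\mathop{\le}\limits^{\tiny\textcircled{\#}}B_1$ combined with Lemma \ref{lemma-Bak-2} forces $B_1=U_1\left[\begin{smallmatrix} T_1 & \widetilde S \\ 0 & Z \end{smallmatrix}\right]U_1^\ast$ for some $Z$ of index one. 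Taking a core decomposition $V^\ast ZV=\left[\begin{smallmatrix} T_3 & S_2 \\ 0 & 0 \end{smallmatrix}\right]$ with $T_3$ nonsingular and absorbing $V$ via $U=U_1\operatorname{diag}(I,V)$ simultaneously brings $A_1$ and $B_1$ into the $3\times 3$ shapes of (\ref{5-2}). Because the resulting top-left $(r+r')\times(r+r')$ block of $B_1$ in this basis is nonsingular, that representation is already a core decomposition of $B_1$, so Theorem \ref{Theorem-2-2} applied to the core-EP decomposition of $B$ in the basis $U$ forces $B_2=U\left[\begin{smallmatrix} 0 & 0 \\ 0 & N_2 \end{smallmatrix}\right]U^\ast$ (split at index $r+r'$).

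The main obstacle is to show that $A_2$ likewise occupies only the bottom-right corner in this common basis; a priori
\[
A_2 = U\begin{bmatrix} 0 & 0 & 0 \\ 0 & \alpha & \beta \\ 0 & \gamma & N_1 \end{bmatrix} U^\ast,
\]
and the task is to prove $\alpha=\beta=\gamma=0$. Here I would invoke the standard fact that $A_2\mathop{\le}\limits^{-}B_2$ entails both $\mathcal{R}(A_2)\subseteq\mathcal{R}(B_2)$ and $\mathcal{R}(A_2^\ast)\subseteq\mathcal{R}(B_2^\ast)$, each an immediate consequence of the defining identity $\mathrm{rk}(A_2)+\mathrm{rk}(B_2-A_2)=\mathrm{rk}(B_2)$. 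Since $\mathcal{R}(B_2)$ is concentrated in the last coordinate block and $\mathcal{R}(B_2^\ast)$ in the last row block, the column inclusion annihilates $\alpha$ and $\beta$, while the row inclusion annihilates $\alpha$ and $\gamma$. The residual relation $N_1\mathop{\le}\limits^{-}N_2$ then drops out of $A_2\mathop{\le}\limits^{-}B_2$ by matching ranks of the surviving bottom-right blocks.
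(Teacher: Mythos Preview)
Your proof is correct and follows the same strategy as the paper's: apply Lemma~\ref{lemma-Bak-2} to $A_1\mathop\le\limits^{\tiny\textcircled{\#}}B_1$ (then refine the bottom block via a core decomposition of $Z$) to obtain the $3\times3$ forms of $A_1,B_1$, deduce the form of $B_2$ from Theorem~\ref{Theorem-2-2}, and finally pin down $A_2$ from $A_2\mathop\le\limits^{-}B_2$. Your explicit handling of that last step via the range inclusions $\mathcal{R}(A_2)\subseteq\mathcal{R}(B_2)$ and $\mathcal{R}(A_2^\ast)\subseteq\mathcal{R}(B_2^\ast)$ in fact supplies a justification that the paper's proof leaves tacit.
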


\begin{proof}
Let $A$ and $B$  have the  form as in (\ref{5-2}),
then
we have
the core-EP decompositions of $A$ and $B$,
i.e.
$A = A_1 + A_2$ and $B = B_1 + B_2 $, respectively,
\begin{align*}
 A_1 & = U\left[ {{\begin{matrix}
 {T_1 }   & {T_2 }   & {S_1 }   \\
 0   & 0   & 0   \\
 0   & 0   & 0   \\
\end{matrix} }} \right]U^\ast  ,
\
A_2 = U\left[{{\begin{matrix}
 0   & 0   & 0   \\
 0   & 0   & 0   \\
 0   & 0   & {N_1 }   \\
\end{matrix} }} \right]U^\ast  , \\
 B_1 & = U\left[{{\begin{matrix}
 {T_1 }   & {T_2 }   & {S_1 }   \\
 0   & {T_3 }   & {S_2 }   \\
 0   & 0   & 0   \\
\end{matrix} }} \right]U^\ast  ,
\
B_2 = U\left[{{\begin{matrix}
 0   & 0   & 0   \\
 0   & 0   & 0   \\
 0   & 0   & {N_2 }   \\
\end{matrix} }} \right]U^\ast  .
 \end{align*}
It follows from Definition \ref{Definition-5-1}
that
 $A\mathop \le \limits^{\mbox{\rm\footnotesize\texttt{CM}}}B$.

Conversely, let  $B = B_1 + B_2 $
be the core-nilpotent
decomposition of $B$.
Since
$A_1 \mathop \le \limits^{\tiny\textcircled{\#}} B_1$,
by applying Lemma \ref{lemma-Bak-2},
we know that there exists a  unitary matrix $U$  such that
\begin{align*}
A_1 = U\left[ {{\begin{matrix}
 {T_1 }   & {T_2 }   & {S_1 }   \\
 0   & 0   & 0   \\
 0   & 0   & 0   \\
\end{matrix} }} \right]U^\ast,
\
B_1 = U\left[{{\begin{matrix}
 {T_1 }   & {T_2 }   & {S_1 }   \\
 0   & {T_3 }   & {S_2 }   \\
 0   & 0   & 0   \\
\end{matrix} }} \right]U^\ast .
 \end{align*}
It follows that 
\begin{align*}
B_2 = U\left[{{\begin{matrix}
 0   & 0   & 0   \\
 0   & 0   & 0   \\
 0   & 0   & {N_2 }   \\
\end{matrix} }} \right]U^\ast,
 \end{align*}
where $N_2 $ is nilpotent.
For $A_2 \mathop \le \limits^ - B_2 $,
we have
\begin{align*}
A_2 = U\left[{{\begin{matrix}
 0   & 0   & 0   \\
 0   & 0   & 0   \\
 0   & 0   & {N_1 }   \\
\end{matrix} }} \right]U^\ast  .
 \end{align*}
where $N_1 $ is nilpotent and $N_1 \mathop \le \limits^ - N_2 $.
\end{proof}

\begin{theorem}
Let $A$ and $B$ be square matrices of the same order over the complex field.
Then
$A\mathop \le \limits^{\mbox{\rm\footnotesize\texttt{CM}}}B$
if and only if
$A\mathop \le \limits^{\tiny\textcircled{\dag}} B$
and
$A-AA^{\tiny\textcircled{\dag}} A\mathop \le \limits^ - B-BB^{\tiny\textcircled{\dag}} B.$
\end{theorem}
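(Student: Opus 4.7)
The plan is to reduce the claimed equivalence directly to two facts already established in the paper: Corollary \ref{Cor-3-1}, which identifies the two summands of the core-EP decomposition via $A_1 = AA^{\tiny\textcircled{\dag}} A$ and $A_2 = A - AA^{\tiny\textcircled{\dag}} A$ (and likewise for $B$), and Theorem \ref{Theorem-4-1}, whose equivalence (i)$\Leftrightarrow$(v) translates $A\mathop \le \limits^{\tiny\textcircled{\dag}}B$ into $A_1\mathop \le \limits^{\tiny\textcircled{\#}}B_1$. Once these are in hand, the statement becomes essentially a matter of rewriting Definition \ref{Definition-5-1} in the language of $A$, $B$, and their core-EP inverses.

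For the forward direction, I would assume $A\mathop \le \limits^{\mbox{\rm\footnotesize\texttt{CM}}}B$ and let $A = A_1 + A_2$ and $B = B_1 + B_2$ be the core-EP decompositions. Definition \ref{Definition-5-1} supplies $A_1\mathop \le \limits^{\tiny\textcircled{\#}}B_1$ and $A_2\mathop \le \limits^{-}B_2$. The first of these gives $A\mathop \le \limits^{\tiny\textcircled{\dag}}B$ by Theorem \ref{Theorem-4-1}, while the substitutions $A_2 = A - AA^{\tiny\textcircled{\dag}}A$ and $B_2 = B - BB^{\tiny\textcircled{\dag}}B$ from Corollary \ref{Cor-3-1} rewrite the second as $A - AA^{\tiny\textcircled{\dag}}A \mathop \le \limits^{-} B - BB^{\tiny\textcircled{\dag}}B$, as required.

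For the converse, I would reverse this reasoning. Starting from $A\mathop \le \limits^{\tiny\textcircled{\dag}}B$, Theorem \ref{Theorem-4-1} yields $A_1\mathop \le \limits^{\tiny\textcircled{\#}}B_1$. Corollary \ref{Cor-3-1} then identifies $A - AA^{\tiny\textcircled{\dag}}A$ with $A_2$ and $B - BB^{\tiny\textcircled{\dag}}B$ with $B_2$, so the second hypothesis is exactly $A_2\mathop \le \limits^{-}B_2$. Both conditions of Definition \ref{Definition-5-1} are satisfied, giving $A\mathop \le \limits^{\mbox{\rm\footnotesize\texttt{CM}}}B$.

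I do not anticipate a substantive obstacle; the argument is a bookkeeping exercise rather than a calculation. The only subtle point is that the identification of $A - AA^{\tiny\textcircled{\dag}}A$ with the nilpotent component $A_2$ depends on the uniqueness of the core-EP decomposition established in Theorem \ref{Theorem-3-6}, so that the expression $A - AA^{\tiny\textcircled{\dag}}A$ unambiguously picks out the correct summand. After acknowledging this, the proof collapses to invoking the three cited results in sequence.
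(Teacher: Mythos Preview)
Your proposal is correct and matches the paper's own proof, which simply cites Theorem~\ref{Theorem-4-1} and Corollary~\ref{Cor-3-1} and declares the equivalence immediate. Your write-up merely unpacks those citations into the forward and converse directions, which is exactly the intended argument.
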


\begin{proof}
By applying Theorem \ref{Theorem-4-1}
and Corollary \ref{Cor-3-1}, 
we have the   equivalent characterization of the core-minus partial order.
\end{proof}

\section*{ Acknowledgements:}
Hongxing Wang  was supported partially by
the National  Natural Science Foundation of China (No.11401243),
and the Anhui Provincial Natural Science Foundation (No.1508085QA15).

\section*{References}


\begin{thebibliography}{99}
 \bibitem{Baksalary2010lma}
 Baksalary O M,    Trenkler G.
Core inverse of matrices[J].
Linear and Multilinear Algebra, 58 (2010), 681--697.


\bibitem{Baksalary2014amc}
Baksalary O M, Trenkler G.
On a generalized core inverse[J].
Applied Mathematics and Computation, 2014, 236: 450--457.


\bibitem{Benitez2010ela}
Bente\'{\i}z J.
A new decomposition for square matrices[J].
Electron. J. Linear Algebra, 2010, 20: 207-225.



\bibitem{Benitez2013laa}
Bente\'{\i}z J, Liu X.
A short proof of a matrix decomposition with applications[J].
Linear Algebra and its Applications, 2013, 438(3): 1398-1414.



\bibitem{Ben2003book}
Ben-Israel A,   Greville T N E.
 { Generalized Inverses: Theory and Applications}, (2nd  edition),
Berlin: Springer, 2003.



\bibitem{Hartwig1980mj}
 Hartwig R E.,
  How to partially order regular elements[J].
  Math. Japon.  1980, 25, 1--13.



 \bibitem{Hartwig1984lma}
Hartwig R E, Spindelb\"{o}k K.
Matrices for which $A^\ast$ and $A^\dag$ commute[J].
Linear and Multilinear Algebra,
1984, 14(3): 241-256.



 \bibitem{Malik2014amc}
Malik S B, Thome N.
On a new generalized inverse for matrices of an arbitrary index[J].
Applied Mathematics and Computation, 2014, 226: 575-580.


 \bibitem{Manjunatha2014lma}
 Manjunatha Prasad K, Mohana K S.
Core-EP inverse[J].
Linear and Multilinear Algebra, 2014, 62(6): 792-802.


\bibitem{Mitra1987laa}
Mitra S K.
On group inverses and the sharp order[J].
Linear Algebra and  Its Applications, 1987, 92(1):17--37.


\bibitem{Mitra1992laa}
Mitra S K, Hartwig R E.
Partial orders based on outer inverses[J].
Linear Algebra and its Applications, 1992, 176: 3-20.


\bibitem{Mitra2010book}
Mitra S K, Bhimasankaram P, Malik S B.
Matrix partial orders, shorted operators and applications[M].
World Scientific, 2010.


\bibitem{Wang2015lma}
Wang H, Liu X.
Characterizations of the core inverse and the core partial ordering[J].
Linear and Multilinear Algebra, 2015, 63(9): 1829-1836.


 \bibitem{Wang2016laa}
 Wang H, Liu X.
 Partial orderings based on core-nilpotent decomposition[J].
 Linear algebra and its applications, 488(2016), 235--248.
\end{thebibliography}
\end{document}